\newtheorem{definition}{Definition}
\newtheorem{lemma}[definition]{Lemma}
\newtheorem{theorem}[definition]{Theorem}
\newtheorem{assumption}[definition]{Assumption}
\newtheorem{remark}[definition]{Remark}
\newcommand*{\N}{\ensuremath{\mathbb{N}}}
\newcommand*{\Z}{\ensuremath{\mathbb{Z}}}
\newcommand*{\R}{\ensuremath{\mathbb{R}}}
\renewcommand{\i}{\mathrm{i}}
\renewcommand{\phi}{\varphi}
\renewcommand{\d}[1]{\,\mathrm{d}#1 \,}
\newcommand{\dS}{\,\mathrm{dS} \,}
\newcommand{\J}{\mathcal{J}} 
\renewcommand{\Re}{\mathrm{Re}\,}
\renewcommand{\Im}{\mathrm{Im}\,}
\newcommand{\W}{{W_{\hspace*{-1pt}\Lambda}}} 
\newcommand{\Wast}{{W_{\hspace*{-1pt}\Lambda^\ast}}} 
\renewcommand{\rho}{{\varrho}}
\renewcommand{\epsilon}{{\varepsilon}}
\newcommand{\loc}{{\mathrm{loc}}}
\newlength{\dhatheight}
\begin{document}

\sloppy

\title{Reconstruction of Local Perturbations in Periodic Surfaces}
\author{Armin Lechleiter\thanks{Center for Industrial Mathematics, University of Bremen
; \texttt{lechleiter@math.uni-bremen.de}} \and 
Ruming Zhang\thanks{Center for Industrial Mathematics, University of Bremen
; \texttt{rzhang@uni-bremen.de}}\ \thanks{corresponding author}}
\date{}
\maketitle

\begin{abstract}
This paper concerns the inverse scattering problem to reconstruct a local perturbation in a periodic structure. Unlike the periodic problems, the periodicity for the scattered field no longer holds, thus classical methods, which reduce quasi-periodic fields in one periodic cell, are no longer available. Based on the Floquet-Bloch transform, a numerical method has been developed to solve the direct problem, that leads to a possibility to design an algorithm for the inverse problem. The numerical method introduced in this paper contains two steps. The first step is initialization, that is to locate the support of the perturbation by a simple method. This step reduces the inverse problem in an infinite domain into one periodic cell. The second step is to apply Newton-CG method to solve the associated optimization problem. The perturbation is then approximated by a finite spline basis. Numerical examples are given at the end of this paper, shows the efficiency of the numerical method.
\end{abstract}

\section{Introduction}

In this paper, we will introduce the numerical method of the inverse scattering problem in a locally perturbed periodic structure. Both the direct and inverse scattering problems in periodic structures have been studied in the last few years, especially for the case that the incident fields are quasi-periodic, e.g. plain waves. A classical way is to reduce the problems defined in an infinite periodic domain into one periodic cell, then the finitely defined problems could be solved in normal methods. However, if the periodicity of the solutions is destroyed, i.e., the incident field is not (quasi-)periodic, or the periodic structure is perturbed, the classical methods are no longer available and new techniques are needed.

The Floque-Bloch transform has been applied to perturbed periodic structures in \cite{Coatl2012}. The direct scattering problems with non-periodic incident fields (Herglotz wave functions) have been studied in \cite{Lechl2015e} and \cite{lechl2016} theoretically, for numerical method see \cite{Lechl2016a,Lechl2016b}; problems with locally perturbed periodic surfaces have also been studied, for theoretical part see \cite{lechl2016} and for numerical method see \cite{lechl2017}. The Bloch transform could also be applied to waveguide problems, see \cite{Hadda2016}. In this paper, the analysis and numerical solutions  of the direct problems are based on these results.

The numerical method developed in this paper is a combination of an initialization and an iteration scheme. The initialization step is to locate the perturbation from a relatively larger area. As was introduced by Ito, Jin and Zou in \cite{Ito2012}, a sampling method that only involves one incident field and a simple evaluation of an integration, could roughly locate the inhomogeneity embedded in  free space. For more results for this method, see \cite{Ito2012a, Ito2013, Li2013}. Followed by their idea, we will design an initialization algorithm to locate the perturbation, such that we could continue the iteration step in a finite domain. In the next step, we will approximate the perturbation by finite number of spline basis, and rewritten the inverse problem as an optimization problem. In this paper, we will apply Newton-CG method (see \cite{Engl1996}) to solve the associated optimization method.

The rest of the paper is organized as follows. In Section 2, a description of the direct problem is made, together with the Green's function in periodic structures. The inverse problem is formulated in Section 3, and the Fr\'{e}chet derivative and its adjoint operator are studied. In Section 4, we will introduce the numerical methods for the inverse problem. In Section 5, several numerical examples are shown to illustrate the efficiency of the numerical method.

\section{Direct Scattering Problem}\label{sec:dir}
\subsection{Formulation}
Suppose $\Gamma:=\{(x_1,\zeta(x_1)):\,x_1\in\R\}$ is a $\Lambda$-periodic interface in $\R^2$ defined by the function $\zeta$ that is $\Lambda$-periodic. $\Gamma_p:=\{(x_1,\zeta_p(x_1)):\,x_1\in\R\}$ is a local perturbation of $\Gamma$, where the function $\zeta-\zeta_p$ has a compact support in $\R$. 
 $\Gamma_H=\R\times\{H\}$ is a straight line above $\Gamma$ and $\Gamma_p$, where $H>\max\{\sup_{x_1\in\R}\zeta(x_1),\,\sup_{x_1\in\R}\zeta_p(x_1)\}$. For some $h>H$, define $\Gamma_h=\R\times\{h\}$. Define the unbounded domains $\Omega$, $\Omega_p$, $\Omega_H$, $\Omega_H^p$ as
\begin{eqnarray*}
&&\Omega=\{(x_1,x_2)\in\R^2:\,x_2>\zeta(x_1)\};\quad
\Omega_p=\{(x_1,x_2)\in\R^2:\,x_2>\zeta_p(x_1)\};\\
&&\Omega_H=\{(x_1,x_2)\in\Omega:\,x_2<H\};\quad
 \Omega^p_H=\{(x_1,x_2)\in\Omega_p:\,x_2<H\}
\end{eqnarray*} 
Define $\Lambda^*:=2\pi/\Lambda$, and the define periodic cell s
\begin{equation*}
\W=\left(-\frac{\Lambda}{2},\frac{\Lambda}{2}\right]\text{ and }\Wast=\left(-\frac{\Lambda^*}{2},\frac{\Lambda^*}{2}\right]=\left(-\frac{\pi}{\Lambda},\frac{\pi}{\Lambda}\right].
\end{equation*} 
Let $\Gamma^\Lambda$, $\Gamma^\Lambda_H$, $\Omega^\Lambda$, $\Omega^\Lambda_H$ denote the domains $\Gamma$, $\Gamma_H$, $\Omega$, $\Omega_H$ restricted in one period $\W\times\R$.

\begin{assumption}
For simplicity, we assume that the functions $\zeta$ and $\zeta_p$ are smooth enough, and the support of $\zeta-\zeta_p$ lies in one periodic cell, i.e., there is some  $J\in\Z$ such that $\rm{supp}(\zeta-\zeta_p)\subset \W+J\Lambda$. 
\end{assumption}

Let the incident field $u^i$ be a downward propagating Herglotz wave function, i.e., 
\begin{equation*}
u^i(x)=\int_{-\pi/2}^{\pi/2} e^{ik(x_1\cos\theta -x_2\sin\theta )}g(\theta)\,d\theta,
\end{equation*}
where $g\in L^2_{\cos}(-\pi/2,\pi/2)$, $L^2_{\cos}(-\pi/2,\pi/2)$ is the closure of the space $C_0^\infty(-\pi/2,\pi/2)$ in the norm
\begin{equation*}
\left\|\phi\right\|_{L^2_{\cos}(-\pi/2,\pi/2)}:=\left[\int_{-\pi/2}^{\pi/2}\left|\phi(\theta)\right|^2/\cos\theta\d\theta\right]^{1/2}.
\end{equation*}
Then the total field $u_p$ satisfies the equations
\begin{eqnarray}
&&\Delta u_p+k^2u_p=0\quad\text{ in }\Omega_p,\label{eq:orig1}\\
&& u_p=0\quad\text{ on }\Gamma_p,\label{eq:orig2}
\end{eqnarray}
with the scattered field $u^s_p:=u_p-u^i$ is the upward propogating field, satisfies
\begin{equation}
\frac{\partial u^s_p}{\partial x_2}=T u^s_p\quad\text{ on }\Gamma_H,
\end{equation}
where $T$ is defined by
\begin{equation*}
(Tv)(x_1)=\frac{\i}{\sqrt{\Lambda}}\int_\R\sqrt{k^2-|\xi|^2}e^{\i x_1\Lambda^*\xi}\hat{v}(\xi)d\xi,\quad \hat{v}(\xi)=\frac{1}{\sqrt{\Lambda}}\int_\R e^{-\i\Lambda^*\xi x_1}v(x_1,H)dx_1.
\end{equation*}
It is a bounded operator from $H_r^{1/2}(\Gamma_H)$ to $H_r^{-1/2}(\Gamma_H)$ for all $|r|<1$, see \cite{Chand2010}. 
Thus the total field $u_p$ satisfies the following boundar condition on $\Gamma_H$: 
\begin{equation}
\frac{\partial u_p}{\partial x_2}=T(u_p|_{\Gamma_H})+\left[\frac{\partial u^i}{\partial x_2}-T(u^i|_{\Gamma_H})\right]\quad\text{ on }\Gamma_H.\label{eq:orig3}
\end{equation}

Recall the definition of the function space $H_r^s(\R)$. The space is defined as the closure of $C_0^\infty(\R)$ with the norm
\begin{equation*}
\left\|\phi\right\|_{H_r^s(\R)}=\left\|\left[(1+|x|^2)^{r/2}\phi(x)\right]\right\|_{H^s(\R)}.
\end{equation*}
for any $s,r\in\R$.  Define the space $\widetilde{H}^1_r(\Omega_H)$ and $\widetilde{H}^1_r(\Omega_H^p)$ as
\begin{equation*}
\widetilde{H}^1_r(\Omega_H):=\left\{f\in H^1_r(\Omega_H):\, f\big|_\Gamma=0\right\};
\quad
\widetilde{H}^1_r(\Omega_H^p):=\left\{f\in H^1_r(\Omega_H^p):\, f\big|_{\Gamma_p}=0\right\},
\end{equation*}
the variational formulation of \eqref{eq:orig1}-\eqref{eq:orig3} is to find $u_p\in\widetilde{H}^1_r(\Omega_H^p)$ such that
\begin{equation}\label{eq:orig_var}
\int_{\Omega_H^p}\left[\nabla u_p\cdot\nabla\overline{v}-k^2 u_p\overline{v}\right]\d x-\int_{\Gamma_H}T^+(u_p|_{\Gamma_H})\overline{v}\d s=\int_{\Gamma_H}f\overline{v}\d s,
\end{equation}
for all $v\in\widetilde{H}^1_r(\Omega_H^p)$, where $f=\frac{\partial u^i}{\partial x_2}-T^+(u^i|_{\Gamma_H})\in H_r^{-1/2}(\Gamma_H)$. From \cite{Chand2010}, the problem is uniquley soluable in $\widetilde{H}^1_r(\Omega_H^p)$ for any $|r|<1$ and $u^i\in H^1_r(\Omega_H^p)$.

\begin{theorem}\label{th:solv}
For $|r|<1$ and any incident field $u^i\in H^1_r(\Omega_H^p)$, the variational problem \eqref{eq:orig_var} is uniquely solvable in $u_p\in\widetilde{H}^1_r(\Omega_H^p)$.
\end{theorem}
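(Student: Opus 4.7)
The plan is to reduce the perturbed problem on $\Omega_H^p$ to the unperturbed one on $\Omega_H$, which is known to be well-posed by \cite{Chand2010}, using that the difference $\zeta-\zeta_p$ is compactly supported inside the single cell $\W+J\Lambda$ (by the standing assumption). Concretely, I would build a smooth bi-Lipschitz diffeomorphism $\Phi\colon \Omega_H^p\to\Omega_H$ that agrees with the identity outside a bounded neighborhood $K$ of this cell, mapping $\Gamma_p$ to $\Gamma$ and fixing $\Gamma_H$ pointwise. Such a $\Phi$ is easily constructed by a vertical shift of the form $\Phi(x_1,x_2)=(x_1, x_2+\chi(x_2)(\zeta(x_1)-\zeta_p(x_1)))$ with a cutoff $\chi$ supported in $[\min\zeta_p,H)$ and equal to $1$ near the interfaces, so that $\Phi$ is the identity near $\Gamma_H$ and outside a horizontal strip over $\W+J\Lambda$.

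Pulling back through $\Phi$ turns \eqref{eq:orig_var} into a variational problem on $\Omega_H$ for $\tilde u:=u_p\circ\Phi^{-1}\in\widetilde H^1_r(\Omega_H)$: the sesquilinear form becomes
\begin{equation*}
a(\tilde u,\tilde v)=\int_{\Omega_H}\bigl[A\nabla\tilde u\cdot\nabla\overline{\tilde v}-k^2 B\,\tilde u\,\overline{\tilde v}\bigr]\d x-\int_{\Gamma_H}T(\tilde u|_{\Gamma_H})\overline{\tilde v}\d s,
\end{equation*}
where the matrix-valued coefficient $A=|\det D\Phi|\,(D\Phi)^{-1}(D\Phi)^{-\top}$ and the scalar $B=|\det D\Phi|$ equal $\Id$ and $1$ respectively outside the compact set $K$. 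Thus $a$ differs from the sesquilinear form of the unperturbed problem only by a term $b(\tilde u,\tilde v)$ that is supported in $K$. The right-hand side, being a pull-back of the same surface functional on $\Gamma_H$, lies in $(\widetilde H^1_r(\Omega_H))^\ast$ by the continuity of $\Phi$.

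Well-posedness on $\Omega_H$ gives a continuous inverse of the unperturbed operator; composing with the perturbation $b$, which factors through the restriction $\widetilde H^1_r(\Omega_H)\to H^1(K)$ and hence is compact into $(\widetilde H^1_r(\Omega_H))^\ast$ by Rellich's theorem (the weight is harmless on the bounded set $K$), yields a compact operator. Hence the full operator is Fredholm of index zero on $\widetilde H^1_r(\Omega_H)$, and by pulling back, on $\widetilde H^1_r(\Omega_H^p)$. It remains to rule out nontrivial $u_p\in\widetilde H^1_r(\Omega_H^p)$ with zero data. For such a $u_p$, the relation $\partial_2 u_p=T(u_p|_{\Gamma_H})$ together with the DtN representation extends $u_p$ as an upward radiating solution above $\Gamma_H$; the usual Rellich-type argument (as in \cite{Chand2010}) forces $u_p\equiv 0$ above $\Gamma_H$, and then unique continuation plus the Dirichlet condition on $\Gamma_p$ propagates this down to all of $\Omega_H^p$.

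The main obstacle is the uniqueness step: Fredholmness drops out cleanly once the diffeomorphism argument is set up, but the uniqueness of the perturbed problem with a non-periodic solution and the rough (Herglotz) data requires the weighted-space machinery of \cite{Chand2010} rather than a plain Rellich argument, since $u_p$ is not quasi-periodic. A cleaner alternative, if one wants to avoid repeating the Rellich analysis, is to invoke the uniqueness theorem of \cite{Chand2010} directly for Lipschitz rough surfaces with Dirichlet data, which already covers the perturbed interface $\Gamma_p$ verbatim; in that case one skips the diffeomorphism entirely and the theorem is really just a citation of \cite{Chand2010} combined with the observation that $\Gamma_p$ itself is a Lipschitz rough surface.
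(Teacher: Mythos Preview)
Your proposal is correct, and in fact your closing remark hits the paper's approach exactly: the paper does not give a proof of this theorem at all but simply cites \cite{Chand2010}, noting that $\Gamma_p$ is the graph of a Lipschitz function and hence falls directly within the scope of the well-posedness theory for rough-surface scattering in weighted spaces developed there. So the theorem is, as you say, ``really just a citation.''

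Your more elaborate route via the diffeomorphism $\Phi$ and a compact-perturbation/Fredholm argument is also correct, and it is worth noting that the paper uses precisely this construction (called $\Phi_p$, with an explicit cubic vertical shift) in the very next subsection---not to prove Theorem~\ref{th:solv}, but to transport the problem to the periodic domain $\Omega_H$ so that the Bloch transform can be applied. So your first approach is not wasted effort: it anticipates the machinery the paper needs anyway, just for a different purpose. The trade-off is that your diffeomorphism argument still ultimately relies on \cite{Chand2010} for the uniqueness step (as you correctly flag), so it does not actually avoid the citation; it only repackages the existence part.
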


\begin{remark}
We can also define the total and scattered fields with the non-perturbed interface $\Gamma$, and the unique solvability in Theorem \ref{th:solv} also holds for this situation. The total field is denoted by $u_0$ and the scattered field is $u^s_0$.
\end{remark}

\subsection{Bloch Transform}

Define the Bloch transform in the periodic domain $\Omega$ for $\phi\in C_0^\infty(\Omega)$
\begin{equation*}
\J_\Omega \phi(\alpha,x)=\left[\frac{\Lambda}{2\pi}\right]^{1/2}\sum_{j\in\Z}\phi\left(x+\left(\begin{matrix}
\Lambda j\\0
\end{matrix}
\right)\right) e^{\i \Lambda j\alpha}.
\end{equation*}
Define the functions space $H_0^r(\Wast;H_\alpha^s(\Omega^\Lambda_H))$ by the closure of $C_0^\infty(\Wast\times\Omega^\Lambda_H)$ with the following norm when $r\in\N$
\begin{equation*}
\left\|\psi\right\|_{H_0^r(\Wast;H_\alpha^s(\Omega^\Lambda_H))}=\left[\sum_{\gamma=0}^r\int_\Wast\left\|\partial^\gamma_\alpha\psi(\alpha,\cdot)\right\|^2_{H_\alpha^s(\Omega^\Lambda_H)}\d\alpha\right]^{1/2}.
\end{equation*}
The definition could be extended to all $r\in\R$ by interpolation and duality arguments. With this function space,  the operator has the following properties, see \cite{lechl2016}.
\begin{theorem}
The Bloch transform $\J_\Omega$ extends to  an isomorphism between $H^s_r(\Omega_H)$ and $H^r_0(\Wast;H^s_\alpha(\Omega^\Lambda_H))$. Further, $\J_\Omega$ is an isometry for $s=r=0$ with inverse
\begin{equation*}
(\J_\Omega^{-1}w)\left(x+\left(\begin{matrix}
\Lambda j\\0
\end{matrix}
\right)\right)=\left[\frac{\Lambda}{2\pi}\right]^{1/2}\int_\Wast w(\alpha,x)e^{\i\alpha\Lambda j}\d \alpha,\quad x\in\Omega^\Lambda_H
\end{equation*}
and the inverse transform equals to the adjoint operator of $\J_\Omega$.
\end{theorem}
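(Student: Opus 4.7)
The plan is to first establish the statement in the base case $s=r=0$, where $\J_\Omega$ is an $L^2$-isometry with the claimed integral formula as its inverse, and then bootstrap to general Sobolev indices using the fact that $\J_\Omega$ intertwines $x$-derivatives fiberwise and converts polynomial weights in $x_1$ into $\alpha$-smoothness. I would work with the dense class $\phi\in C_0^\infty(\Omega_H)$, for which the defining sum over $j$ is finite. Restricting $x$ to the periodic strip $\Omega^\Lambda_H$, the series $\sum_{j\in\Z}\phi(x+(\Lambda j,0)^T)e^{\i\Lambda j\alpha}$ is, for each fixed $x$, a Fourier series in $\alpha$ on $\Wast$ of period $\Lambda^\ast=2\pi/\Lambda$. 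Applying Parseval on $\Wast$ and then integrating in $x\in\Omega^\Lambda_H$ yields
\begin{equation*}
\|\J_\Omega\phi\|_{L^2(\Wast\times\Omega^\Lambda_H)}^2=\sum_{j\in\Z}\|\phi(\cdot+(\Lambda j,0)^T)\|_{L^2(\Omega^\Lambda_H)}^2=\|\phi\|_{L^2(\Omega_H)}^2,
\end{equation*}
where the last equality uses the tiling of $\Omega_H$ by horizontal translates of $\Omega^\Lambda_H$. This proves the isometry on a dense set, and hence by continuation on all of $L^2(\Omega_H)$.

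Next I would check the inversion formula directly on the same dense class. Plugging $\J_\Omega\phi$ into the stated integral and exchanging sum and integral, the orthogonality $\int_\Wast e^{\i\alpha\Lambda(j-k)}\d\alpha=\Lambda^\ast\delta_{jk}$ collapses the expression to $\phi(x+(\Lambda j,0)^T)$, confirming $\J_\Omega^{-1}\J_\Omega=\Id$. Since the formula manifestly defines a bounded right-inverse at the $L^2$-level, the isometry forces $\J_\Omega$ to be unitary, so $\J_\Omega^{-1}=\J_\Omega^\ast$ automatically.

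For the full isomorphism statement I would exploit two intertwining properties: (i) $\J_\Omega$ commutes with $x$-derivatives, $\partial_x^\beta(\J_\Omega\phi)=\J_\Omega(\partial_x^\beta\phi)$, which converts $H^s$-regularity in $x$ into $H^s_\alpha$-regularity on the fiber (with the $\alpha$-quasiperiodic boundary condition $\psi(\alpha,x+(\Lambda,0)^T)=e^{-\i\Lambda\alpha}\psi(\alpha,x)$ built in by construction); and (ii) multiplication by $x_1$ corresponds, up to bounded terms, to $-\i\partial_\alpha$ on the Bloch side, since decomposing $x_1=\Lambda j+y_1$ with $y_1\in\W$ and using $\Lambda j\, e^{\i\Lambda j\alpha}=-\i\partial_\alpha e^{\i\Lambda j\alpha}$ shows that each power of the weight $(1+|x_1|^2)^{1/2}$ translates into one $\alpha$-derivative of $\J_\Omega\phi$, modulo a bounded multiplicative factor stemming from $y_1$. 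Iterating this for nonnegative integer $r$ gives the continuous mapping $H^s_r(\Omega_H)\to H^r_0(\Wast;H^s_\alpha(\Omega^\Lambda_H))$, and the same argument applied to $\J_\Omega^{-1}$ supplies the reverse bound.

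The main obstacle is exactly this step (ii): making the heuristic correspondence between $x_1$-weights and $\alpha$-derivatives rigorous, particularly for noninteger $r$. Once the integer case is secured by the commutator computation above, the general real $r$ follows by real interpolation, and negative $s$ follows by duality using the already-established adjoint identity $\J_\Omega^{-1}=\J_\Omega^\ast$. This is the same route taken in \cite{lechl2016}, which may be invoked to close the argument without re-deriving the interpolation identities.
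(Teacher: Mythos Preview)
The paper does not supply its own proof of this theorem; it merely states the result and cites \cite{lechl2016} for the proof. So there is no in-paper argument to compare against. Your sketch is the standard route to this kind of Bloch--Floquet isomorphism and is, as far as one can tell, the same strategy carried out in the cited reference: verify the $L^2$-isometry via Parseval on $\Wast$, check the inversion formula on compactly supported smooth functions, deduce unitarity, then upgrade to the weighted Sobolev scales by exploiting that $\J_\Omega$ commutes with $x$-derivatives and intertwines the weight $x_1$ with $-\i\partial_\alpha$ (up to bounded remainders), finishing by interpolation and duality.

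One small wrinkle worth cleaning up: when you plug $\J_\Omega\phi$ into the stated inverse formula, the exponentials in the paper's conventions combine to $e^{\i\Lambda(j+k)\alpha}$ rather than $e^{\i\Lambda(j-k)\alpha}$, so the orthogonality collapses to $k=-j$, not $k=j$. This is harmless at the level of the isometry and unitarity conclusions (it reflects a sign convention in either the forward or inverse formula as written in the paper), but you should flag it or silently adjust the sign so that your inversion computation literally returns $\phi(x+(\Lambda j,0)^T)$. Apart from that, the sketch is sound, and since you already invoke \cite{lechl2016} to close the interpolation step, your write-up is effectively aligned with what the paper intends.
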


Bloch transform only defined on periodic domains, so following \cite{lechl2017}, define the diffeomorphism mapping $\Omega$ onto $\Omega_p$ as
\begin{equation*}
\Phi_p:\, x\mapsto\left(x_1,x_2+\frac{(x_2-H)^3}{(\zeta(x_1)-H)^3}\left(\zeta_p(x_1)-\zeta(x_1)\right)\right),
\end{equation*}
then  $\rm{supp}(\Phi_p-I_2)\subset \Omega^\Lambda_H+J(\Lambda,0)^T$. Define the transformed total field $u_T=u_p\circ\Phi_p\in \widetilde{H}^1_r(\Omega_H)$, then from \eqref{eq:orig_var} it satisfies the following variational problem
\begin{equation}
\label{eq:var}
\int_{\Omega_H^p}\left[A_p\nabla u_T\cdot\nabla\overline{v_T}-k^2 c_p u\overline{v_T}\right]\d x-\int_{\Gamma_H}T^+(u_T\big|_{\Gamma_H})\overline{v_T}\d s=\int_{\Gamma_H}f\overline{v_T}\d s
\end{equation}
for all $v_T\in \widetilde{H}^1(\Omega_H)$, where
\begin{eqnarray*}
&& A_p(x):=\big|\det\nabla\Phi_p(x)\big|\left[\left(\nabla\Phi_p(x)\right)^{-1}\left(\big(\nabla\Phi_p(x)\right)^{-1}\big)^T\right]\in L^\infty(\Omega_H,\R^{2\times 2})
\\
&& c_p(x):=\big|\det\nabla\Phi_p(x)\big|\in L^\infty(\Omega_H).
\end{eqnarray*}
It is easy to deduce that the support of $A_p-I_2$ and $c_p-1$ are all subsets of $\Omega^\Lambda_H+J(\Lambda,0)^T$. From the solution $u_p\in \widetilde{H}^1_r(\Omega_H^p)$, the transformed function $u_T\in\widetilde{H}^1_r(\Omega_H)$.

Let $w_T=\J_\Omega u_T\in L^2(\Wast;\,\widetilde{H}^1_\alpha(\Omega^\Lambda_H))$ and $v_B\in L^2(\Wast;\,\widetilde{H}^1_\alpha(\Omega^\Lambda_H))$, then it satisfies
\begin{equation}\label{eq:var_blo}
\int_\Wast a_\alpha(w_T(\alpha,\cdot),v_T(\alpha,\cdot))\d\alpha+\left[\frac{\Lambda}{2\pi}\right]^{1/2}b(\J^{-1}_\Omega w_T,\J^{-1}_\Omega v_T)=\int_\Wast\int_{\Gamma^\Lambda_H}f(\alpha,\cdot)\overline{v_T}\,\d s \d\alpha,
\end{equation}
where 
\begin{eqnarray*}
&&a_\alpha(u,v)=\int_{\Omega^\Lambda_H}\left[\nabla u\cdot\nabla\overline{v}-k^2 u\overline{v}\right]\,dx-\int_{\Gamma^\Lambda_H}(T_\alpha u)\overline{v}\,\d s,\\
&&b(u,v)=\int_{\Omega^\Lambda_H+J(\Lambda,0)^T}\left[(A_p-I)\nabla u\cdot \nabla\overline{v}-k^2(c_p-1)u\overline{v}\right]\,\d x,\\
&&f(\alpha,\cdot)=\frac{\partial (\J_\Omega u^i)(\alpha,\cdot)}{\partial x_2}-T_\alpha(\J_\Omega u^i)(\alpha,\cdot),\\
&&T_\alpha(\phi)=i\sum_{j\in\Z}\sqrt{k^2-|\Lambda^*j-\alpha|^2}\hat{\phi}(j)e^{\i(\Lambda^*j-\alpha)x_1},\,\phi=\sum_{j\in\Z}\hat{\phi}(j)e^{\i(\Lambda^*j-\alpha)x_1}.
\end{eqnarray*}
For any $x_2>H$, for each fixed $\alpha$, the $\alpha$-quasi-periodic solution $w_T(\cdot,\alpha)$ has the following representation, i.e., the Rayleigh expansion
\begin{equation}\label{eq:ray}
w_T(x,\alpha)=\sum_{j\in\Z}\hat{w}_T(j,\alpha)e^{\i(\Lambda^*j-\alpha)x_1+\i\beta_j x_2},\quad\beta_j=\sqrt{k^2-|\Lambda^* j-\alpha|^2}.
\end{equation}

From \cite{lechl2017}, the variational problem is uniquely solvable in certain conditions.

\begin{theorem}Suppose $\Gamma_p$ is the graph of a Liptshitz continuous function. 
\begin{enumerate}
\item The variational problem \eqref{eq:var_blo} is uniquely solvable in $H^r_0(\Wast;\,\widetilde{H}^1_\alpha(\Omega^\Lambda_H))$.
\item If $u^i\in H^2_r(\Omega_H^p)$ for $r\in[0,1)$ and $\zeta,\zeta_p\in C^{2,1}(\R,\R)$, then $w_B\in L^2(\Wast;\,H^2_\alpha(\Omega^\Lambda_H))$.
\end{enumerate}
\label{th:uni}
\end{theorem}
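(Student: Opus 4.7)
The plan is to reduce both parts to the already established solvability theorem (Theorem \ref{th:solv}) for the original variational problem \eqref{eq:orig_var} by exploiting two bijections: the diffeomorphism $\Phi_p$ between $\Omega_H$ and $\Omega_H^p$, and the Bloch transform $\J_\Omega$ between weighted Sobolev spaces on $\Omega_H$ and fibered spaces over $\Wast$. Nothing deep is needed beyond tracing these correspondences carefully; the only real work is checking that the passage through $\Phi_p$ preserves enough regularity for part 2.

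For part 1, I would argue as follows. Given an incident field $u^i\in H^1_r(\Omega_H^p)$ with $|r|<1$, Theorem \ref{th:solv} produces a unique $u_p\in\widetilde{H}^1_r(\Omega_H^p)$ solving \eqref{eq:orig_var}. Since $\Phi_p$ is a bi-Lipschitz diffeomorphism equal to the identity outside $\Omega^\Lambda_H+J(\Lambda,0)^T$, pullback $u_T=u_p\circ\Phi_p$ yields an element of $\widetilde{H}^1_r(\Omega_H)$ solving \eqref{eq:var}, and the correspondence $u_p\leftrightarrow u_T$ is bijective. Applying the Bloch isomorphism $\J_\Omega:H^1_r(\Omega_H)\to H^r_0(\Wast;H^1_\alpha(\Omega^\Lambda_H))$ from the theorem preceding, and using that \eqref{eq:var_blo} is obtained from \eqref{eq:var} simply by writing each term through $\J_\Omega$ (the $b$-term stays in physical space because $A_p-I_2$ and $c_p-1$ are not periodic), I obtain a one-to-one correspondence between solutions of \eqref{eq:var} in $\widetilde{H}^1_r(\Omega_H)$ and solutions of \eqref{eq:var_blo} in $H^r_0(\Wast;\widetilde{H}^1_\alpha(\Omega^\Lambda_H))$. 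Unique solvability of the former therefore transfers to the latter.

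For part 2, the route is the same but one order of regularity higher. Assuming $u^i\in H^2_r(\Omega_H^p)$ with $r\in[0,1)$ and $\zeta,\zeta_p\in C^{2,1}(\R,\R)$, the boundary $\Gamma_p$ is $C^{2,1}$ and the DtN operator $T$ is pseudodifferential of order one with smooth symbol, so standard interior and boundary elliptic regularity applied to the strong form of \eqref{eq:orig1}--\eqref{eq:orig3} upgrades $u_p$ from $\widetilde{H}^1_r$ to $\widetilde{H}^2_r(\Omega_H^p)$; the weight $(1+|x|^2)^{r/2}$ commutes with the elliptic estimates up to lower-order terms and can be handled by a standard cut-off/weighted commutator argument since $r<1$. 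Because $\zeta-\zeta_p$ is compactly supported and $\Phi_p$ is $C^{2,1}$ with a $C^{1,1}$ inverse, the composition $u_T=u_p\circ\Phi_p$ lies in $\widetilde{H}^2_r(\Omega_H)$. Finally, the Bloch isomorphism at regularity level $s=2$ sends $H^2_r(\Omega_H)$ onto $H^r_0(\Wast;H^2_\alpha(\Omega^\Lambda_H))$; since $r\geq 0$ and $\Wast$ is bounded, this space embeds continuously into $L^2(\Wast;H^2_\alpha(\Omega^\Lambda_H))$, which gives the claimed regularity $w_B\in L^2(\Wast;H^2_\alpha(\Omega^\Lambda_H))$.

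The main obstacle is the $H^2$ upgrade in part 2: verifying that the elliptic shift from $H^1_r$ to $H^2_r$ works with the weight and, separately, that pullback by $\Phi_p$ does not destroy the second derivative. For the weight, the argument reduces to the unweighted case by localising on annular strips $\{N\leq|x_1|\leq N+1\}$ on which the weight is comparable to a constant, using that $u^i$ and its data inherit the $H^2_r$ decay; the constants in the resulting estimates are summable because $r<1$ is consistent with the functional setting used for the DtN operator. For the pullback, one needs $\nabla\Phi_p\in C^{1,1}$, which is exactly guaranteed by $\zeta,\zeta_p\in C^{2,1}$. Once these two points are in place, part 2 follows from the isomorphism property of $\J_\Omega$ together with the trivial embedding $H^r_0(\Wast;\cdot)\hookrightarrow L^2(\Wast;\cdot)$.
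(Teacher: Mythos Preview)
The paper does not actually prove this theorem: it is stated immediately after the sentence ``From \cite{lechl2017}, the variational problem is uniquely solvable in certain conditions'' and no proof environment follows. So there is no in-paper argument to compare against; the result is simply quoted from the companion paper on the direct problem.

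Your outline is the natural one and is, in spirit, what the cited reference does: transport the well-posedness of \eqref{eq:orig_var} through the bi-Lipschitz change of variables $\Phi_p$ to obtain \eqref{eq:var}, then through the Bloch isomorphism $\J_\Omega$ to obtain \eqref{eq:var_blo}; for part~2, combine elliptic regularity on the physical side with the $H^2$-level mapping property of $\J_\Omega$ and the embedding $H^r_0(\Wast;\cdot)\hookrightarrow L^2(\Wast;\cdot)$ for $r\geq 0$. Two small points worth tightening if you write this out in full: (i) in part~1 the test space in \eqref{eq:var_blo} is $L^2(\Wast;\widetilde{H}^1_\alpha(\Omega^\Lambda_H))$, so when you invoke the Bloch isomorphism you should check that testing against $L^2$ in $\alpha$ is consistent with the solution lying in $H^r_0$ --- this is fine because $\J_\Omega$ is an isometry at $r=0$ and the $b$-term is a compact perturbation localised in one cell; (ii) in part~2 the $H^2$ upgrade near $\Gamma_H$ uses that the DtN map $T$ gains one derivative, which is standard but should be stated, and the weighted estimate is cleanest done by conjugating with the weight rather than summing over annuli. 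None of this changes your strategy.
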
 

\begin{remark}
For the fields with non-perturbed surface $\Gamma$, there is no need to do the transformation $\Phi_p$. We denote the Bloch transform of $u$ by $w=\J_\Omega u$, and Theorem \ref{th:uni} also holds for this problem.
\end{remark}

\subsection{Green's Functions in the Periodic Domain}

Suppose $G(x,y)$ is the Green's function located in $y\in \Omega_H$, then it satisfies
\begin{eqnarray}
\Delta G(x,y)+k^2G(x,y)&=&0,\quad x\in\Omega_H\setminus\{y\},\label{eq:green1}\\
G(x,y)&=&0,\quad x\in\Gamma,\label{eq:green2}\\
\frac{\partial G(x,y)}{\partial x_2}&=&T G(x,y),\quad x\in\Gamma_H,\label{eq:green3}
\end{eqnarray}
For $\Gamma=\R\times\{0\}$ we note that $G_0(x,y)=\Phi(x,y)-\Phi(x,y')$ is the incident half-space Green's function, where $\Phi(x,y)=\frac{\i}{4}H^{(1)}_0(k|x-y|)$, $y=(y_1,y_2)^T$ and $y'=(y_1,-y_2)^T$. From Theorem \ref{th:uni}, with the fact that $G_0(x,y)\in H_r^1(\R\times[0,H])$, the Green's function $G(x,y)$ is well-defined in $\Omega_H\setminus\{y\}$. Moreover, $G^s(x,y):=G(x,y)-G_0(x,y)\in H^1_r(\Omega_H)$.

\begin{theorem}[Theorem III.1, \cite{Lechl2008a}]
The Green's function is symmetric, i.e., $G(x,y)=G(y,x)$ with $x,\,y\in \Omega$.
\end{theorem}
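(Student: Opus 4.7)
The plan is to apply Green's second identity to $G(\cdot,y_1)$ and $G(\cdot,y_2)$ on a truncated and punctured version of $\Omega_H$, and then pass to the limit in the two parameters. Fix $y_1,y_2\in\Omega_H$ with $y_1\neq y_2$, and for small $\epsilon>0$ and large $N>0$ set
\begin{equation*}
D^N_\epsilon=\bigl\{x\in\Omega_H:\,|x_1|<N\bigr\}\setminus\bigl(\overline{B_\epsilon(y_1)}\cup\overline{B_\epsilon(y_2)}\bigr).
\end{equation*}
Both Green's functions are smooth on a neighbourhood of $\overline{D^N_\epsilon}$ and satisfy \eqref{eq:green1} there, so Green's second identity yields
\begin{equation*}
0=\int_{\partial D^N_\epsilon}\bigl[G(x,y_1)\partial_\nu G(x,y_2)-G(x,y_2)\partial_\nu G(x,y_1)\bigr]\,ds(x).
\end{equation*}
The boundary splits into pieces of $\Gamma$, of $\Gamma_H$, the two small circles around $y_1,y_2$, and two vertical segments at $x_1=\pm N$, each of which I treat separately.

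On $\Gamma\cap\{|x_1|<N\}$ the integrand vanishes by \eqref{eq:green2}. On $\Gamma_H\cap\{|x_1|<N\}$ I substitute $\partial_\nu G(\cdot,y_j)=TG(\cdot,y_j)$ from \eqref{eq:green3}; after sending $N\to\infty$ the contribution becomes $\int_{\Gamma_H}[(TG_1)G_2-(TG_2)G_1]\,ds$, which vanishes once one knows the non-conjugated bilinear form of $T$ is symmetric. From the definition of $T$ and Plancherel one computes
\begin{equation*}
\int_{\Gamma_H}(Tu)\,v\,ds=\i\int_\R\sqrt{k^2-|\xi|^2}\,\hat u(\xi)\,\hat v(-\xi)\,d\xi,
\end{equation*}
and the substitution $\xi\mapsto-\xi$ together with the evenness of the symbol exchanges $u$ and $v$. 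On the two small circles, the classical logarithmic asymptotics of $\Phi$ and the decomposition $G(\cdot,y_j)=\Phi(\cdot,y_j)-\Phi(\cdot,y_j')+(\text{smoother remainder})$ give, as $\epsilon\to 0$,
\begin{equation*}
\int_{\partial B_\epsilon(y_1)}[\cdots]\,ds\to -G(y_1,y_2),\qquad\int_{\partial B_\epsilon(y_2)}[\cdots]\,ds\to G(y_2,y_1),
\end{equation*}
with signs reflecting that the outward normal of $D^N_\epsilon$ points into the balls.

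The only genuinely delicate step, and the one I expect to be the main obstacle, is showing that the two vertical contributions at $x_1=\pm N$ disappear in the limit, since $G(\cdot,y_j)$ has no pointwise lateral decay. The idea is to exploit $G^s(\cdot,y_j):=G(\cdot,y_j)-G_0(\cdot,y_j)\in H^1_r(\Omega_H)$ for some $r\in(0,1)$ guaranteed by Theorem~\ref{th:uni}: the weighted bound
\begin{equation*}
\int_{\Omega_H}(1+x_1^2)^r\bigl[|G^s|^2+|\nabla G^s|^2\bigr]\,dx<\infty
\end{equation*}
combined with Fubini produces a sequence $N_k\to\infty$ along which $\int_0^H(|G^s|^2+|\nabla G^s|^2)(\pm N_k,x_2)\,dx_2\to 0$, killing every term in the vertical boundary integral that contains at least one factor of $G^s$ or $\nabla G^s$. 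The purely $G_0$-part left over is the corresponding vertical integral for the half-space Green's function $G_0(x,y)=\Phi(x,y)-\Phi(x,y')$, which either vanishes by an explicit stationary-phase/decay estimate on $\Phi$ or, alternatively, can be eliminated by invoking the known symmetry of $G_0$ directly. Assembling all five boundary contributions and sending first $\epsilon\to 0$, then $N=N_k\to\infty$, gives $G(y_2,y_1)-G(y_1,y_2)=0$, which is the required symmetry.
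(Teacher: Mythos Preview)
The paper does not actually prove this theorem: it is quoted verbatim as ``Theorem III.1, \cite{Lechl2008a}'' and no argument is given. So there is no ``paper's own proof'' to compare against; your proposal has to be judged on its own merits.

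Your outline is the standard symmetry proof and is essentially correct. A few comments on the individual pieces:
\begin{itemize}
\item The $\Gamma_H$ step is fine. The identity $\int_{\Gamma_H}(Tu)\,v\,ds=\i\int_\R\sqrt{k^2-|\xi|^2}\,\hat u(\xi)\,\hat v(-\xi)\,d\xi$ and its symmetry under $u\leftrightarrow v$ is exactly the computation the paper carries out (with a complex conjugate) in the Lemma preceding the adjoint theorem, so you are in good company.
\item The lateral pieces are the only place that needs care, and your strategy is sound. The paper records $G^s(\cdot,y)\in H^1_r(\Omega_H)$ for $0<r<1$; with $r>0$ the Chebyshev-type argument you sketch does produce a sequence $N_k\to\infty$ along which the vertical trace integrals of $|G^s|^2+|\nabla G^s|^2$ tend to zero (apply it to the sum over both source points and both signs to get a common sequence). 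The cross terms $G_0\cdot\partial_\nu G^s$ etc.\ then vanish because $G_0$ and $\nabla G_0$ are bounded on the vertical segments. For the residual purely-$G_0$ contribution, note that $G_0(x,y)=\Phi(x,y)-\Phi(x,y')$ actually decays like $|x_1|^{-3/2}$ (the two Hankel terms cancel to leading order), so this piece tends to zero directly; you do not need to invoke the half-space symmetry separately.
\item Be explicit about the order of limits. Since the $\epsilon$-circle contributions converge uniformly in $N$ (they only see a fixed neighbourhood of $y_1,y_2$), and the $\Gamma$, $\Gamma_H$, and lateral contributions are independent of $\epsilon$ once $\epsilon$ is small, the double limit is unproblematic---but say so.
\end{itemize}
With these details filled in, the argument is complete.
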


Suppose $x_s\in\Omega_H$ and $x\in\Gamma_h$, then from the boundary condition \eqref{eq:green3}, if $G(x,x_s)$ has the following form of Fourier transform, i.e., 
\begin{equation}
G(x,x_s)=\int_\R e^{\i x_1\Lambda^*\xi}{\phi}_s(\xi,x_2)\d \xi,
\end{equation}
then its normal derivative on $\Gamma_h$ 
\begin{equation}
\frac{\partial G(x,x_s)}{\partial x_2}=\i\int_\R\sqrt{k^2-|\xi|^2} e^{\i x_1\Lambda^*\xi}{\phi}_s(\xi,x_2)\d \xi.
\end{equation}

\begin{theorem} Suppose $x_p,x_q$ are two different points in $\Omega_H$. 
Then the Green's function satisfies the following property
\begin{equation}
\Im\left[ G(x_p,x_q)\right]=-\Lambda\int_{-k}^k \sqrt{k^2-|\xi|^2}{\phi}_p(\xi,h)\overline{{\phi}}_q(\xi,h)\d \xi.
\end{equation}
\end{theorem}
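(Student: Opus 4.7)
The plan is to apply Green's second identity to $u:=G(\cdot,x_p)$ and $v:=\overline{G(\cdot,x_q)}$ on the punctured slab $D_\epsilon := \big(\Omega\cap\{x_2<h\}\big)\setminus\big(\overline{B_\epsilon(x_p)}\cup\overline{B_\epsilon(x_q)}\big)$, and then to let $\epsilon\to 0$. Inside $D_\epsilon$ both $u$ and $v$ satisfy the homogeneous Helmholtz equation, so the bulk integrand $u\Delta v - v\Delta u$ vanishes identically and only the four boundary pieces $\Gamma$, $\Gamma_h$, $\partial B_\epsilon(x_p)$, $\partial B_\epsilon(x_q)$ remain to be analysed.

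On $\Gamma$ both $u$ and $v$ vanish by \eqref{eq:green2}, so that piece contributes nothing. On the small circles I would exploit that $G(\cdot,x_s)-\Phi(\cdot,x_s)$ is smooth near $x_s$, because the periodic Green's function inherits the logarithmic singularity of the free-space fundamental solution $\Phi(x,x_s)\sim -\tfrac{1}{2\pi}\log|x-x_s|$. The standard local layer-potential calculation on $\partial B_\epsilon(x_p)\cup\partial B_\epsilon(x_q)$ then produces, in the limit $\epsilon\to 0$, a contribution of the form $G(x_q,x_p)-\overline{G(x_p,x_q)}$; invoking the symmetry $G(x_p,x_q)=G(x_q,x_p)$ from the preceding theorem, this collapses to $2\i\,\Im\big[G(x_p,x_q)\big]$.

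The heart of the argument is the integral on $\Gamma_h$. Substituting the two displayed Fourier representations of $G(\cdot,x_s)\big|_{\Gamma_h}$ and $\partial_{x_2}G(\cdot,x_s)\big|_{\Gamma_h}$ into $\int_{\Gamma_h}\big(u\,\partial_{x_2}v - v\,\partial_{x_2}u\big)\d s$ produces a double integral in variables $\xi,\eta$; the inner $x_1$-integration is the Plancherel-type identity $\int_\R e^{\i x_1\Lambda^*(\xi-\eta)}\d{x_1}=\Lambda\,\delta(\xi-\eta)$, which folds it to a single integral with integrand proportional to $\big[\sqrt{k^2-|\xi|^2}+\overline{\sqrt{k^2-|\xi|^2}}\big]\phi_p(\xi,h)\overline{\phi_q(\xi,h)}$. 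With the standard branch $\sqrt{k^2-|\xi|^2}=\i\sqrt{|\xi|^2-k^2}$ for $|\xi|>k$, the bracket vanishes on the evanescent regime and equals $2\sqrt{k^2-|\xi|^2}$ on the propagating regime $(-k,k)$; matching this against the $2\i\,\Im[G(x_p,x_q)]$ contributed by the two small circles yields the claimed identity.

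In my view the only real technical obstacle is making the formal Fourier calculation on the unbounded line $\Gamma_h$ rigorous: one needs enough integrability of $\phi_p,\phi_q$ to apply Fubini and the $\delta$-identity. A clean route is to interpret the representations as Plancherel identities in $L^2(\R)$, which is legitimate because $G(\cdot,x_s)\big|_{\Gamma_h}$ lies in a weighted trace space of $\Gamma_h$ by Theorem~\ref{th:solv} together with the upward Rayleigh decay, so that $\phi_p(\cdot,h),\phi_q(\cdot,h)\in L^2(\R)$. Everything else amounts to careful sign-bookkeeping for the outward normals and for the branch of the square root.
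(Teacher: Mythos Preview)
Your proposal is correct and follows essentially the same route as the paper: Green's identity on the punctured slab, vanishing contribution on $\Gamma$ from the Dirichlet condition, extraction of $2\i\,\Im G(x_p,x_q)$ from the two small circles via the $\Phi$-singularity, and collapse of the $\Gamma_h$-integral to a single $\xi$-integral through the formal identity $\int_\R e^{\i x_1\Lambda^*(\xi-\eta)}\d{x_1}=\Lambda\,\delta(\xi-\eta)$, after which $\Re\sqrt{k^2-|\xi|^2}$ truncates the integration to $[-k,k]$. The only cosmetic differences are that the paper evaluates the two circle limits directly as $\overline{G}(x_p,x_q)$ and $-G(x_p,x_q)$ without explicitly invoking the symmetry theorem, and it does not spell out the $L^2$-Plancherel justification that you (rightly) flag.
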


\begin{proof} As $x_p\neq x_q$, for any    $0<\epsilon<|x_p-x_q|/3$, define $\Omega^\epsilon_p=\{x\in\Omega_H:\,|x-x_p|<\epsilon\}$ (and similar for $\Omega^\epsilon_q$), then $\Omega^\epsilon_p\cap \Omega^\epsilon_q=\emptyset$. Denote $\widetilde{G}^s(x,y)=G(x,y)-\Phi(x,y)$, then 
\begin{equation*}
\Delta \widetilde{G}^s(x,x_p)+k^2\widetilde{G}^s(x,x_p)=0,\quad\Delta G(x,x_q)+k^2G(x,x_q)=0\quad \text{ in }\Omega^\epsilon_p.
\end{equation*}
With these properties,
\begin{align*}
&\int_{\partial \Omega^\epsilon_p} \left(G(x,x_p)\frac{\partial \overline{G}(x,x_q)}{\partial \nu(x)}-\frac{\partial G(x,x_p)}{\partial\nu(x)}\overline{G}(x,x_q)\right)\d x\\
=&\int_{\partial \Omega^\epsilon_p} \left(\Phi(x,x_p)\frac{\partial \overline{G}(x,x_q)}{\partial \nu(x)}-\frac{\partial \Phi(x,x_p)}{\partial\nu(x)}\overline{G}(x,x_q)\right)\d x\\
=&\frac{\i}{4}\int_0^{2\pi}\left[H^{(1)}_0(k\epsilon)\frac{\partial \overline{G}(x,x_q)}{\partial \nu(x)}+k H^{(1)}_1(k\epsilon)\overline{G}(x,x_q)\right]\epsilon\d\theta.
\end{align*}
From the asympototic behaviers of $H^{(1)}_1(k\epsilon)$ and $H^{(1)}_1(k\epsilon)$, 
\begin{equation*}
\int_{\partial \Omega^\epsilon_p} \left(G(x,x_p)\frac{\partial \overline{G}(x,x_q)}{\partial \nu(x)}-\frac{\partial G(x,x_p)}{\partial\nu(x)}\overline{G}(x,x_q)\right)\d x\rightarrow \overline{G}(x_p,x_q),\text{ as }\epsilon\rightarrow 0.
\end{equation*}
Similarly, 
\begin{equation*}
\int_{\partial \Omega^\epsilon_p} \left(G(x,x_p)\frac{\partial \overline{G}(x,x_q)}{\partial \nu(x)}-\frac{\partial G(x,x_p)}{\partial\nu(x)}\overline{G}(x,x_q)\right)\d x\rightarrow -G(x_p,x_q),\text{ as }\epsilon\rightarrow 0.
\end{equation*}

From the Green's identity and Dirichlet boundary conditions, for small enough $\epsilon>0$
\begin{align*}
&\int_{\Gamma_H}\left(G(x,x_p)\frac{\partial \overline{G}(x,x_q)}{\partial x_2}-\frac{\partial G(x,x_p)}{\partial x_2}\overline{G}(x,x_q)\right)\d x\\
=&\left(\int_{\partial \Omega^\epsilon_p}+\int_{\partial \Omega^\epsilon_q}\right)\left(G(x,x_p)\frac{\partial \overline{G}(x,x_q)}{\partial \nu}-\frac{\partial G(x,x_p)}{\partial \nu}\overline{G}(x,x_q)\right)\d x,
\end{align*}
where the right hand side tends to $\overline{G}(x_p,x_q)-G(x_p,x_q)$ as $\epsilon\rightarrow 0$,
i.e.,
\begin{equation}
2\i\Im \left[ G(x_p,x_q)\right]=\int_{\Gamma_H}\left(G(x,x_p)\frac{\partial \overline{G}(x,x_q)}{\partial x_2}-\frac{\partial G(x,x_p)}{\partial x_2}\overline{G}(x,x_q)\right)\d x.
\end{equation}
 As the Green's function has the integral representation, 
\begin{equation*}
\begin{aligned}
&\int_{\Gamma_H}G(x,x_p)\frac{\partial \overline{G}(x,x_q)}{\partial x_2}\d x\\
= &-\i\int_\R\left[\int_\R e^{\i x_1\Lambda^*\xi}{\phi}_p(\xi,h)\d \xi\right]\left[\int_\R\overline{\sqrt{k^2-|\eta|^2}}e^{-\i x_1\Lambda^*\eta}\overline{{\phi}}_q(\eta,h)\d \eta\right] \d x_1\\
= &-\i\int_\R \left(\int_\R \left[\int_\R e^{\i x_1\Lambda^*(\xi-\eta)}\d x_1\right]\overline{\sqrt{k^2-|\eta|^2}}\overline{{\phi}}_q(\eta,h)\d \eta\right){\phi}_p(\xi,h)\d \xi\\
=&-\Lambda\i\int_\R \int_\R \delta(\xi-\eta)\overline{\sqrt{k^2-|\eta|^2}}\overline{{\phi}}_q(\eta,h)\d \eta{\phi}_p(\xi,h)\d \xi\\
=&-\Lambda\i\int_\R  \overline{ \sqrt{k^2-|\xi|^2}}\overline{{\phi}}_q(\xi,h){\phi}_p(\xi,h)\d \xi,
\end{aligned}
\end{equation*}
thus  
\begin{align*}
\Im\left[ G(x_p,x_q)\right]&=-\Lambda\int_\R \Re\left[\sqrt{k^2-|\xi|^2}\right]{\phi}_p(\xi,h)\overline{{\phi}}_q(\xi,h)\d \xi\\
&=-\Lambda\int_{-k}^k \sqrt{k^2-|\xi|^2}{\phi}_p(\xi,h)\overline{{\phi}}_q(\xi,h)\d \xi.
\end{align*}

\end{proof}

Define the function
\begin{equation}
\mathcal{I}(x_p,x_q):=\int_{\Gamma_h}G(x,x_p)\overline{G}(x,x_q)\d x,
\end{equation}
the following property is also easy to prove
\begin{equation}
\mathcal{I}(x_p,x_q)=\Lambda\int_R {\phi}_p(\xi,h)\overline{{\phi}}_q(\xi,h)\d \xi.
\end{equation}
When $h$ is large enough, from the definition of UPRC,
\begin{equation*}
\mathcal{I}(x_p,x_q)\rightarrow \Lambda\int_{-k}^k {\phi}_p(\xi,h)\overline{{\phi}}_q(\xi,h)\d \xi.
\end{equation*}
When $x_q$ and $x_p$ are close enough, $\phi_p(\xi,h)\rightarrow\phi_q(\xi,h)$, thus
\begin{eqnarray*}
&&\Im\left[ G(x_p,x_q)\right]\rightarrow-\Lambda\int_{-k}^k \sqrt{k^2-|\xi|^2}|{\phi}_p(\xi)|^2\d \xi\\
&&\mathcal{I}(x_p,x_q)\rightarrow \Lambda\int_{-k}^k |{\phi}_p(\xi)|^2\d \xi\geq k \left|\Im\left[ G(x_p,x_q)\right]\right|.
\end{eqnarray*}

As $G_0(x_p,x_q)$ has a singularity at $x_p=x_q$, the imaginary part of the Green's function $\left|\Im\left[G(x_p,x_q)\right]\right|$ may possess a sigularity, or a local maximum at that point. From the approximate behavior of $\mathcal{I}(x_p,x_q)$ when  when $x_p$ falls in a small enough neighbourhood of $x_q$, it could be expected to obtain relatively large values.

\section{The Inverse Problem}

Suppose $\zeta\in C^{2,1}(\R,\R)$ and is a $\Lambda$-periodic knowns function, and define the space 
\begin{equation*}
X=\Big\{p\in C^{2,1}(\R,\R):\,\rm{supp}(p)\subset \W+J\Lambda\Big\}.
\end{equation*}
In the following, we will always assume that $p\in X$ and $\zeta_p:=\zeta+p$, and $u^i\in H^2(\Omega_h^p)$, thus from Theorem \ref{th:uni}, $w_B\in L^2(\Wast;\,H^2_\alpha(\Omega^\Lambda_h))$. Then the functions $u_T=\J^{-1}w_B\in H^2(\Omega_h)$, $u_p\in H^2(\Omega^p_h)$. Define the following scattering operator
\begin{equation*}
\begin{aligned}
S:\quad X & \rightarrow  L^2(\Gamma_h)\\
p & \mapsto u_p^s|_{\Gamma_h}
\end{aligned}
\end{equation*}

Suppose data is measured on $\Gamma_H$, which is the scattered field with some noise, denoted by $U$, then the inverse problem is described as follows.

\noindent
\textbf{Inverse Problem: }Given a measured data $U$, to find $p\in X$ such that 
\begin{equation}\label{eq:ip}
\|Sp-U\|^2_{L^2(\Gamma_h)}=\min_{p^*\in X}\|Sp^*-U\|^2_{L^2(\Gamma_h)}.
\end{equation}

In the rest part of this section, we will describe some properties of the scattering operator.

\begin{theorem}
The operator $S$ is differentiable, and its derivative $DS$ has the following representation, i.e.,
\begin{equation}
\begin{aligned}
DS:\quad X & \rightarrow & L^2(\Gamma_h)\\
h &\mapsto & u'|_{\Gamma_h}
\end{aligned}
\end{equation}
where $u'\in H^1(\Omega_h^p)$ satisfies
\begin{eqnarray}
\label{eq:der1}&&\Delta u'+k^2u'=0\quad\text{ in }\Omega_p,\\
\label{eq:der2}&&u'=-\frac{\partial u_p}{\partial \nu}h\nu_2\quad\text{ on }\Gamma_p,\\
\label{eq:der3}&&\frac{\partial u'}{\partial x_2}=Tu'\quad\text{ on }\Gamma_h.
\end{eqnarray}
$u_p$ is the total field of the scattering problem \eqref{eq:orig1}-\eqref{eq:orig3}. 
\end{theorem}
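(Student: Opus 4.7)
The plan is to reduce the shape differentiation of $S$ to a parameter differentiation of the variational problem \eqref{eq:var} posed on the fixed domain $\Omega_H$. Given $p\in X$ and a direction $h\in X$, set $p_s:=p+sh$ for $|s|$ small, and let $\Phi_{p_s}$ denote the diffeomorphism constructed in Section~\ref{sec:dir} with $\zeta_p$ replaced by $\zeta+p_s$. Since $p_s$ is affine in $s$ and the construction of $\Phi_{p_s}$ is polynomial in $p_s$, the coefficient maps $s\mapsto A_{p_s}$ and $s\mapsto c_{p_s}$ are of class $C^1$ from a neighbourhood of $0$ into $L^\infty(\Omega_H,\R^{2\times 2})$ and $L^\infty(\Omega_H)$, with derivatives $\dot A$, $\dot c$ supported in $\Omega^\Lambda_H+J(\Lambda,0)^T$. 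Let $u_T^{(s)}\in\widetilde H^1(\Omega_H)$ be the transformed total field associated with $p_s$.

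The first step is a standard implicit function theorem argument. By Theorem~\ref{th:uni} the operator induced by \eqref{eq:var} at $s=0$ is a bounded bijection on $\widetilde H^1(\Omega_H)$, and the coefficient dependence on $s$ is of class $C^1$, so $s\mapsto u_T^{(s)}$ is Fr\'echet differentiable at $s=0$. Its derivative $\dot u_T\in\widetilde H^1(\Omega_H)$ is characterised by
\begin{equation*}
\int_{\Omega_H}\bigl[A_p\nabla\dot u_T\cdot\nabla\overline v-k^2 c_p\dot u_T\overline v\bigr]\d x-\int_{\Gamma_H}T(\dot u_T)\overline v\d s=-\int_{\Omega_H}\bigl[\dot A\nabla u_T\cdot\nabla\overline v-k^2\dot c\,u_T\overline v\bigr]\d x
\end{equation*}
for every $v\in\widetilde H^1(\Omega_H)$; the right-hand side is compactly supported in $\Omega^\Lambda_H+J(\Lambda,0)^T$ and fits the framework of Theorem~\ref{th:uni}.

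The second step is to transport the result back to the physical domain. Outside a neighbourhood of $\Omega^\Lambda_H+J(\Lambda,0)^T$ the diffeomorphism $\Phi_{p_s}$ is the identity, so $u_{p_s}$ and $u_p$ share a common domain there, and the pointwise $s$-derivative $u':=\partial_s u_{p_s}\big|_{s=0}$ is well-defined; inside, $u'$ is reconstructed from $\dot u_T$, $u_T$ and $\partial_s\Phi_{p_s}\big|_{s=0}$ by the chain rule. The Helmholtz equation \eqref{eq:der1} and the transparent-boundary condition \eqref{eq:der3} then follow directly by differentiating $(\Delta+k^2)u_{p_s}=0$ and \eqref{eq:orig3} in $s$, since the geometry above $\Gamma_h$ is independent of $s$. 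The Dirichlet datum \eqref{eq:der2} comes from differentiating the identity $u_{p_s}\bigl(x_1,\zeta(x_1)+p_s(x_1)\bigr)=0$ at $s=0$, which yields
\begin{equation*}
u'(x_1,\zeta_p(x_1))=-\frac{\partial u_p}{\partial x_2}(x_1,\zeta_p(x_1))\,h(x_1);
\end{equation*}
using $u_p=0$ on $\Gamma_p$ one has $\nabla u_p=(\partial u_p/\partial\nu)\nu$ there, hence $\partial_{x_2}u_p=(\partial u_p/\partial\nu)\nu_2$, giving \eqref{eq:der2}. Taking the trace on $\Gamma_h$ identifies $DS(p)h$.

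The main obstacle is the rigorous identification of the pulled-back object $\dot u_T$ with the genuine shape derivative $u'$ on the varying domain $\Omega_{p_s}$, together with showing that $u'\in H^1(\Omega_h^p)$ with a meaningful $L^2$-trace of $\partial u_p/\partial\nu$ on the Lipschitz interface $\Gamma_p$. This is exactly where the second part of Theorem~\ref{th:uni} is indispensable: the $H^2$-regularity of $u_p$ ensures that $\partial u_p/\partial\nu\in H^{1/2}(\Gamma_p)$, so that the source term $-(\partial u_p/\partial\nu)h\nu_2$ on $\Gamma_p$ defines an admissible Dirichlet datum and the linearised problem \eqref{eq:der1}--\eqref{eq:der3} is uniquely solvable with an $L^2$-trace on $\Gamma_h$.
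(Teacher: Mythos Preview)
The paper does not actually prove this theorem; it states only that ``the proof is similar to that in \cite{Kirsc1993b} so we omit it here.'' Your proposal is a correct sketch of precisely that standard domain-derivative argument---pull back to a fixed reference domain via $\Phi_{p_s}$, differentiate the variational problem in $s$ using the implicit function theorem and the invertibility from Theorem~\ref{th:uni}, transport back, and read off the Dirichlet datum by differentiating $u_{p_s}(x_1,\zeta(x_1)+p_s(x_1))=0$---so there is nothing to compare and nothing missing.
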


The proof is similar to that in \cite{Kirsc1993b} so we omit it here.


Now we will study the property of $DS$, before that, the following property of the DtN operator is needed. 

\begin{lemma}
$(\cdot,\cdot)$ is the duality pairing defined by the inner product of $L^2(\Gamma_h)$. The DtN operator $T$ satisfies $\left(Tu,\overline{v}\right)=\left(u,\overline{Tv}\right)$.
\end{lemma}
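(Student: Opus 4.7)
Reading $(\cdot,\cdot)$ as the sesquilinear $L^2(\Gamma_h)$ inner product $(f,g):=\int_\R f\,\overline{g}\,\d {x_1}$, the claimed identity $(Tu,\overline{v})=(u,\overline{Tv})$ unfolds to
\begin{equation*}
\int_\R (Tu)(x_1)\,v(x_1)\d{x_1}=\int_\R u(x_1)\,(Tv)(x_1)\d{x_1},
\end{equation*}
that is, $T$ is symmetric as a bilinear form. This is what I will set out to prove; everything below operates with the bilinear pairing.

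\textbf{Main strategy.} I would exploit the representation of $T$ as a Fourier multiplier with symbol $\lambda(\xi):=\i\sqrt{k^2-|\xi|^2}$, and use the fact that $\lambda$ is even in $\xi$. Concretely, for $u,v\in C_0^\infty(\Gamma_h)$, insert the definition of $T$ into $\int_\R (Tu)v\d{x_1}$, invoke Fubini (legitimate by compact support and smoothness) to swap the order of integration, and recognise the inner $x_1$-integral as $\sqrt{\Lambda}\,\hat v(-\xi)$, using
\begin{equation*}
\frac{1}{\sqrt{\Lambda}}\int_\R e^{\i x_1\Lambda^*\xi}v(x_1)\d{x_1}=\hat v(-\xi).
\end{equation*}
This yields
\begin{equation*}
\int_\R (Tu)v\d{x_1}=\i\int_\R \sqrt{k^2-|\xi|^2}\,\hat u(\xi)\,\hat v(-\xi)\d\xi.
\end{equation*}
The change of variable $\xi\mapsto-\xi$, together with the evenness $\sqrt{k^2-|{-\xi}|^2}=\sqrt{k^2-|\xi|^2}$, transforms the right-hand side into $\i\int_\R \sqrt{k^2-|\xi|^2}\,\hat v(\xi)\,\hat u(-\xi)\d\xi$, which by the same identification is exactly $\int_\R u\,(Tv)\d{x_1}$.

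\textbf{Extension by density and the main obstacle.} After establishing the identity on the dense subspace $C_0^\infty(\Gamma_h)\times C_0^\infty(\Gamma_h)$, I would extend it to the natural domain of $T$ (namely $H_r^{1/2}(\Gamma_h)$ with $|r|<1$, paired with $H_{-r}^{-1/2}(\Gamma_h)$) by using that $T:H_r^{1/2}\to H_r^{-1/2}$ is bounded (recalled from \cite{Chand2010}) together with continuity of the duality pairing. The only genuinely delicate point is the Fubini step: for functions that merely lie in $H^{1/2}$ the symbol $\lambda(\xi)$ has polynomial growth in $\xi$, so one cannot interchange integrals against arbitrary $\hat u,\hat v$ without care. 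However, one never needs to justify Fubini outside the smooth class---the density/continuity argument takes care of everything else---so this is more a bookkeeping issue than a real obstacle. An alternative that sidesteps Fourier analysis entirely would be a Green's-identity argument in the strip $\R\times(H,h')$ using the radiating extension associated to $T$, but it requires more setup, so I would prefer the Fourier-symbol route.
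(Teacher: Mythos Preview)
Your proposal is correct and follows essentially the same route as the paper: work first with $u,v\in C_c^\infty(\Gamma_h)$, plug in the Fourier-multiplier definition of $T$, use Fubini to recognise the inner integral as $\sqrt{\Lambda}\,\hat v(-\xi)$, arrive at $\i\int_\R\sqrt{k^2-|\xi|^2}\,\hat u(\xi)\hat v(-\xi)\d\xi$, and then conclude by symmetry (your change of variable $\xi\mapsto-\xi$ is exactly what the paper's ``similar procedure'' is doing implicitly) and density. The only cosmetic differences are that the paper states the density extension for $L^2(\Gamma_h)$ rather than for the $H_r^{\pm 1/2}$ pairing, and does not spell out the evenness of the symbol as you do.
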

 
\begin{proof}
Suppose $u,v\in C_c^\infty(\Gamma_h)$, then from the definition of the DtN map,
\begin{eqnarray*}
(Tu)(x_1)=\frac{\i}{\sqrt{\Lambda}}\int_\R \sqrt{k^2-|\xi|^2}e^{\i x_1\Lambda^*\xi}\hat{u}(\xi)\,\d\xi,\quad \hat{u}(\xi)=\frac{1}{\sqrt{\Lambda}}\int_\R e^{-\i\Lambda^*\xi x_1} u(x_1)\, dx_1;\\
(Tv)(x_1)=\frac{\i}{\sqrt{\Lambda}}\int_\R \sqrt{k^2-|\xi|^2}e^{\i x_1\Lambda^*\xi}\hat{v}(\xi)\,\d\xi,\quad \hat{v}(\xi)=\frac{1}{\sqrt{\Lambda}}\int_\R e^{-\i\Lambda^*\xi x_1} v(x_1)\, dx_1.
\end{eqnarray*}
Then we have the following calculations.
\begin{eqnarray*}
\left(Tu,\overline{v}\right)&=&\int_\R (Tu)(x_1)v(x_1)\d x_1\\
&=&\frac{\i}{\sqrt{\Lambda}}\int_\R\int_\R\sqrt{k^2-|\xi|^2}e^{\i x_1\Lambda^*\xi}\hat{u}(\xi)v(x_1)\,d\xi\,\d x_1\\
&=&\frac{\i}{\sqrt{\Lambda}}\int_\R\sqrt{k^2-|\xi|^2}\hat{u}(\xi)\left[\int_\R e^{\i x_1\Lambda^*\xi}v(x_1)\,dx_1\right]\,\d\xi\\
&=& i\int_\R\sqrt{k^2-|\xi|^2}\hat{u}(\xi)\hat{v}(-\xi)\,\d \xi.
\end{eqnarray*}
From similar procedure, 
\begin{eqnarray*}
\left(u,\overline{Tv}\right)=i\int_\R\sqrt{k^2-|\xi|^2}\hat{v}(\xi)\hat{u}(-\xi)\,d\xi=\left(Tu,\overline{v}\right),
\end{eqnarray*}
the proof is finished from the denseness of $C_c^\infty(\Gamma_h)$ in the space $L^2(\Gamma_h)$.
\end{proof}

With these results, the adjoint operator of $DS$ from $L^2(\Gamma_h)$ to $X^*$ is given in the following theorem.

\begin{theorem}
The adjoint operator of $DS$, denote by $M$, is then given by
\begin{equation}\label{eq:adjoint}
M\phi=-\Re\left[\frac{\partial \overline{u_p}}{\partial \nu}\frac{\partial \overline{z}}{\partial \nu}\right]\nu_2,
\end{equation}
where $\nu$ is the normal derivative upwards, $u_p$ is the total field of \eqref{eq:orig1}-\eqref{eq:orig3}, $z$ satisfies
\begin{eqnarray}
\Delta z+k^2 z=0\quad\text{ in }\Omega_p,\label{eq:adj1}\\
z=0\quad\text{ on }\Gamma_p,\label{eq:adj2}\\
\frac{\partial z}{\partial x_2}-Tz=\overline{\phi}\quad\text{ on }\Gamma_h.\label{eq:adj3}
\end{eqnarray}
\end{theorem}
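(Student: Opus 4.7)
The plan is to apply Green's second identity to $u'$ (the linearized field from \eqref{eq:der1}–\eqref{eq:der3}) and to the adjoint state $z$ in the strip $\Omega_h^p$, using the adjoint boundary condition \eqref{eq:adj3} to recover $\bar\phi$ and the Dirichlet condition \eqref{eq:adj2} to isolate the surface term involving $h$. We work in the weighted setting of Theorem \ref{th:uni} so that $u', z \in H^1_r$ decay enough at $x_1 \to \pm\infty$ to justify the integrations by parts on the unbounded strip (truncating at $x_1 = \pm R$ and letting $R\to\infty$, the vertical boundary contributions vanish).

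First I would compute the duality pairing on $\Gamma_h$ and peel off the DtN contributions. Substituting $\bar\phi = \partial_{x_2} z - Tz$ from \eqref{eq:adj3} yields
\begin{equation*}
\int_{\Gamma_h} u'\,\bar\phi \d s = \int_{\Gamma_h} u'\,\partial_{x_2} z \d s - \int_{\Gamma_h} u'\,(Tz) \d s.
\end{equation*}
The key symmetry $\int_{\Gamma_h}(Tu)v\d s = \int_{\Gamma_h} u(Tv)\d s$ (an immediate consequence of the lemma preceding the theorem, applied with $v$ in place of $\bar v$) converts the second term into $\int_{\Gamma_h} z\,(Tu')\d s = \int_{\Gamma_h} z\,\partial_{x_2} u'\d s$, where I used the forward boundary condition \eqref{eq:der3}. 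Hence
\begin{equation*}
\int_{\Gamma_h} u'\,\bar\phi \d s = \int_{\Gamma_h}\bigl(u'\,\partial_{x_2} z - z\,\partial_{x_2} u'\bigr)\d s.
\end{equation*}

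Next I would apply Green's second identity to $u'$ and $z$ on $\Omega_h^p$. Since both satisfy $(\Delta+k^2)\cdot = 0$ by \eqref{eq:der1} and \eqref{eq:adj1}, the volume integral vanishes, and orienting the outward normal as $+e_2$ on $\Gamma_h$ and $-\nu$ on $\Gamma_p$ (with $\nu$ the upward unit normal to $\Gamma_p$) gives
\begin{equation*}
\int_{\Gamma_h}\bigl(u'\,\partial_{x_2} z - z\,\partial_{x_2} u'\bigr)\d s = \int_{\Gamma_p}\bigl(u'\,\partial_\nu z - z\,\partial_\nu u'\bigr)\d s.
\end{equation*}
On $\Gamma_p$ we use \eqref{eq:adj2} to kill the second summand and \eqref{eq:der2} to replace $u'$ by $-\partial_\nu u_p\, h\,\nu_2$, which yields
\begin{equation*}
\int_{\Gamma_h} u'\,\bar\phi\d s = -\int_{\Gamma_p}\partial_\nu u_p\,\partial_\nu z\; h\,\nu_2 \d s.
\end{equation*}

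Finally, taking real parts and using that $h$, $\nu_2$ are real-valued while $\Re[\partial_\nu u_p\,\partial_\nu z] = \Re\bigl[\overline{\partial_\nu \bar u_p\,\partial_\nu\bar z}\bigr] = \Re[\partial_\nu\bar u_p\,\partial_\nu\bar z]$, we obtain
\begin{equation*}
\Re\int_{\Gamma_h} u'\,\bar\phi\d s = \int_{\Gamma_p}\Bigl(-\Re\bigl[\partial_\nu\bar u_p\,\partial_\nu\bar z\bigr]\nu_2\Bigr)\, h \d s,
\end{equation*}
which identifies the real $L^2$-adjoint as the expression in \eqref{eq:adjoint}. I expect the main technical obstacle to be justifying that the vertical side contributions in the truncated Green's identity vanish as $R\to\infty$; this requires that $z$ inherit the radiation behaviour of the forward problem (solvability and decay for the inhomogeneous adjoint problem \eqref{eq:adj1}–\eqref{eq:adj3} can be reduced to Theorem \ref{th:uni} by absorbing $\bar\phi$ into the right-hand side of the variational formulation). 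Once this is in place, the chain of identities above is essentially bookkeeping on the boundary.
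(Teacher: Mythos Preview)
Your proposal is correct and follows essentially the same approach as the paper: substitute the adjoint boundary condition \eqref{eq:adj3} for $\bar\phi$, use the symmetry of $T$ together with \eqref{eq:der3} to rewrite the $\Gamma_h$ integral as a Wronskian-type expression, move it to $\Gamma_p$ via Green's second identity for two Helmholtz solutions, and then insert \eqref{eq:adj2} and \eqref{eq:der2}. The only cosmetic differences are that the paper carries the real part from the outset (writing $(h,M\phi)_X=\Re(u',\phi)_{\Gamma_h}$) rather than taking it at the end, and that you are more explicit than the paper about the truncation argument needed to justify Green's identity on the unbounded strip.
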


\begin{proof}
For any $h\in X$ and $\phi\in L^2(\Gamma_H)$, as $X$ is the space of real-valued functions,
\begin{eqnarray*}
(h,M\phi)_{X}&=&\Re\big((DS)h,\phi\big)_{\Gamma_h}=\Re(u',\phi)_{\Gamma_h}=\Re\left(u',\frac{\partial \overline{v}}{\partial x_2}-\overline{Tv}\right)_{\Gamma_h}\\
&=&\Re\left[\left(u',\frac{\partial \overline{z}}{\partial x_2}\right)_{\Gamma_h}-\left(Tu',\overline{z}\right)_{\Gamma_h}\right]=\Re\left[\left(u',\frac{\partial \overline{z}}{\partial x_2}\right)_{\Gamma_h}-\left(\frac{\partial u'}{\partial x_2},\overline{z}\right)_{\Gamma_h}\right]\\
&=&\Re\int_{\Gamma_h}\left[u'\frac{\partial z}{\partial x_2}-\frac{\partial u'}{\partial x_2}z\right]\,\d s=\Re\int_{\Gamma_p}\left[u'\frac{\partial z}{\partial \nu}-\frac{\partial u'}{\partial \nu}z\right]\,\d s.
\end{eqnarray*}

Use the Helmholtz equation with the homogeneous Dirichlet boundary condition, i.e., $z=0$ on $\Gamma_p$, and the condition \eqref{eq:der2},
\begin{equation*}
(h,M\phi)_{X}=\Re\int_{\Gamma_p}\left[u'\frac{\partial z}{\partial \nu}\right]\,\d s=-\Re\int_{\Gamma_p}\left[h\nu_2\frac{\partial u_p}{\partial \nu}\frac{\partial z}{\partial \nu}\right]\,ds,
\end{equation*}
then we have the final results
\begin{equation*}
M\phi=-\Re\left[\frac{\partial \overline{u_p}}{\partial \nu}\frac{\partial \overline{z}}{\partial \nu}\right]\nu_2.
\end{equation*}

\end{proof}

\begin{remark}

To solve the problem \eqref{eq:adj1}-\eqref{eq:adj3}, we will study the variational forms of the Bloch transform of $z_T:=z\circ\Phi_p$. Followed by the skills in Section \ref{sec:dir}, $z_B:=\J_\Omega z_T$ satisfies the following variaitonal form 
\begin{equation}\label{eq:adj_var}
{\int_\Wast a_\alpha(w_B(\alpha,\cdot),v_B(\alpha,\cdot))d\alpha+\left[\frac{\Lambda}{2\pi}\right]^{1/2}b(\J^{-1}_\Omega w_B,\J^{-1}_\Omega v_B)=\int_\Wast\int_{\Gamma^\Lambda_H}g(\alpha,\cdot)\overline{v_B}\,dsd\alpha,}
\end{equation}
for all $v_B\in L^2(\Wast;\,\widetilde{H}^1_\alpha(\Omega_H))$, where 
\begin{eqnarray*}
g(\alpha,\cdot)=\left(\J_\Omega \overline{\phi}\right)(\alpha,\cdot).
\end{eqnarray*}

From the definition of Bloch tranform, the function 
\begin{eqnarray*}
g(\alpha,\cdot)=\J_\Omega\overline{\phi}(\alpha,x)&=&\left[\frac{\Lambda}{2\pi}\right]^{1/2}\sum_{j\in\Z}\overline{\phi}(x_1+\Lambda j,x_2)e^{-\i\Lambda j\alpha}\\
&=&\left[\frac{\Lambda}{2\pi}\right]^{1/2}\sum_{j\in\Z}\overline{\phi(x_1+\Lambda j,x_2)e^{\i\Lambda j\alpha}}\\
&=&\left[\frac{\Lambda}{2\pi}\right]^{1/2}\sum_{j\in\Z}\overline{\phi(x_1+\Lambda j,x_2)e^{-\i\Lambda j(-\alpha)}}\\
&=&\overline{\J_\Omega\phi(-\alpha,x)}.
\end{eqnarray*}
Thus the problem could be solved by solving the variational problem \eqref{eq:adj_var}.
\end{remark}

\section{Numerical Method for Inverse Problems}

The numerical method to inverse problems is devided into two parts. The first part is to  initialize  the location of the  perturbation, i.e., to find out the integer $J$ such that $\rm{supp}(\zeta-\zeta_p)\subset\W+J\Lambda$, with the idea from \cite{Ito2012}. With the known periodic cell where the perturbation located, the second step is to utilize the Newton-CG method to reconstruct the perturbed function. The two steps are introduced in the following two subsections.

\subsection{Initial Guess from the Sampling Method}

In this subsection, assume that $p\geq 0$. 
From Section \ref{sec:dir}, the transformed total field $u_T=u_p \circ \Phi_p$ in $H^1_0(\Omega_H)$ satisfies
\begin{align}\label{eq:varFormPhi}
  \int_{\Omega_H} & \left[ A_p \nabla u_T \cdot \nabla \overline{v} - k^2 \, c_p \, u_T \overline{v}\right] \d{x}   - \int_{\Gamma_H} \overline{v} \, T (u_T) \dS
  = \int_{\Gamma_H} f\overline{v} \dS \quad \text{for all $v \in \widetilde{H}^1(\Omega_H^p)$.} \nonumber
\end{align}


Denote the total field of the non-perturbed surface $\Gamma$ by $u_0$. Next we denote the difference between the transformed total field $u_T$ and  $u_0$ by $w \in H^1(\Omega_H)$. 
This difference satisfies 
\begin{align*}
  & \int_{\Omega_H} \left[ \nabla w \cdot \nabla \overline{v} - k^2 \, w \overline{v}\right] \d{x} 
  - \int_{\Gamma_H} \overline{v} \, T (w) \dS \\
  & \qquad = \int_{\Omega^H} \left[ (A_p-I) \nabla u_T \cdot \nabla \overline{v} - k^2 \, (c_p-1) u_T \overline{v}\right] \d{x}  \quad \text{for all $v \in \widetilde{H}^1(\Omega_H^p)$,}
\end{align*}
subject to vanishing boundary values $w=0$ in $H^{1/2}(\Gamma)$. 
Indeed, $u_T$ vanishes on $\Gamma$ since $u_p$ vanishes on $\Gamma_p$ and $u_0$ vanishes on $\Gamma$ as well. 
It hence follows from the representation formula for solutions to the Helmholtz equation that $w \in H^1(\Omega_H^p)$ can be represented as
\begin{align*}
  w(x) = \int_{\Omega_H^\Lambda} \left[ \big((I-A_p) \nabla_y G(x,y)\big) \cdot \nabla u_T(x) - k^2 \, (1-c_p) \, G(x,y) u_T(x) \right] \d{y} \qquad \text{in } \Omega_H. 
\end{align*}
Obviously, the latter representation formula extends to all points $x \in \Omega$, such that we extend $w$ by this formula to a function in $H^1_\loc(\Omega)$. 
Since this extension solves the Helmholtz equation and satisfies the angular spectrum representation encoded in $T$, it is well-known that the extension belongs to $H^1_0(\Omega_{h})$ for all $h \geq H$.

Taking the inner product on some measurement line $\Gamma_H$ of $w(x)$ with $\overline{G(x,x_p)}$, involving a parameter $x_p \in \Omega^H$ then shows by the results from Section 2 that 
\begin{align*}
  \int_{\Gamma_h} w(x)  \overline{G(x,x_p)} \d{S(x)}
  =  \int_{\Gamma_h}\int_{\Omega_H^\Lambda} & \left[ \big((I-A_p) \nabla_y G(x,y)\big) \cdot \nabla u_T(x) \right. \\
  & \qquad \left. - k^2 \, (1-c_p) \, G(x,y) u_T(x) \right] \overline{G(x,x_p)} \d{y} \d{x} .
\end{align*}
From the function $\mathcal{I}(y,x_p) = \int_{\Gamma_h} G(x,y) \overline{G(x,x_p)} \d{S(x)}$ and note that 
\begin{align*}
  \int_{\Gamma_h} w(x) \overline{G(x,x_p)} \d{S(x)} 
  =  \int_{\Omega_H^\Lambda} & \left[ \big((I-A_p(y)) \nabla u_T(y)\big) \cdot \nabla_y \mathcal{I}(y,x_p) \right. \\
  & \qquad\qquad - \left. k^2 \, (1-c_p(y)) \, u_T(y) \mathcal{I}(y,x_p) \right] \d{y}.
\end{align*}
Choosing any (small) neighborhood $\mathcal{N}$ of $(\Omega^H \setminus \Omega^H_p)$, there exists a diffeomorphism $\Phi_p$ supported in $\mathcal{N}$, such that the volumetric domain of integration in the last formula can be concentrated to $\mathcal{N}$. 
As $\Re\mathcal{I}(y,x_p)$ attains a maximum at $y=x_p$ this shows that the scalar product 
\begin{align*}
  \int_{\Gamma_h} (u_p-u) & \overline{G(\cdot,x_p)} \d{S(x)} 
  = \int_{\Gamma_h} (u_T-u) \overline{G(\cdot,x_p)} \d{S(x)} 
  = \int_{\Gamma_h} u_T \overline{G(\cdot,x_p)} \d{S(x)} \\
  &= \int_{\mathcal{N}} \left[ \big((I-A_p(y)) \nabla u_T(y)\big) \cdot \nabla_y \mathcal{I}(y,x_p)-  k^2 \, (1-c_p(y)) \, u_T(y) \mathcal{I}(y,x_p) \right] \d{y}
\end{align*}
can be expected to reach local maxima in $\mathcal{N}$, too.  

When applying this method, firstly, a relatively large domain $D$ is known to contain the perturbation. Assume that there is a positive integer $J_{max}\geq |J|$ such that $D\supset \cup_{j=-J_{max}}^{J_{max}}\left[\Omega^\Lambda_H+j(\Lambda,0)^T\right]$. Then the algorithm is to find the $J$ such that $\rm{supp}(p)\subset\Omega^\Lambda_H+J(\Lambda,0)^T$. Then the algorithm of this method could be concluded as follows.

\begin{algorithm}\caption{Sampling Method}
Input: Data $U$; domain $D$, $\mathcal{M}$ is a regular mesh for $D$.
\begin{enumerate}
\item Solve the non-perturbed problem, and get the scattered data $U_0:=u_0|_{\Gamma_h}$, define $w=U-U_0$.
\item For each $x_q\in \mathcal{M}$, evaluate $\mathcal{G}(x_q):=\left|\int_{\Gamma_h}w\overline{G(\cdot,x_q)}\d S\right|$.
\item Pick up the integer $J\in\left\{-\mathcal{J},\dots,\mathcal{J}\right\}$ such that $\mathcal{G}$ obtains the largest value in $\Omega^\Lambda_H+J(\Lambda,0)^T$.
\end{enumerate} 
\end{algorithm}

\subsection{Newton-CG Method}

In this section, we will discuss the Newton-CG method to solve \eqref{eq:ip}, i.e., to find some $p\in X$ such that it is a minimizer of $\|Sp-U\|^2_{L^2(\Gamma_h)}$. 

From the initialization step, the integer $J$ is known.  
Suppose $\{\phi_n\}_{n=1}^{+\infty}$ is a basis in the space $X$, and we are looking for the approximation of the function $p$ that belongs to the subspace of $X$, i.e.,
\begin{equation*}
X_N:=\rm{span}\big\{\phi_1,\phi_2,\dots,\phi_N\big\},\quad X_N\subset X,
\end{equation*}
where $N$ is a positive integer. Let $p_N\in X_N$ be the approximation of $p$, i.e.,
\begin{equation*}
p_N(x)=\sum_{n=1}^Nc_n^N\phi_n(x),\quad\text{ where }{\bm C^N}=(c^N_1,\dots,c^N_N)\in\R^N.
\end{equation*}
Define the following operators
\begin{equation*}
\begin{aligned}
B: \,\R^N&\rightarrow  X_N\\
{\bm C^N}&\mapsto  p_N(x)
\end{aligned}\, ;
\quad
\begin{aligned}
P: \,&\R^N\rightarrow  L^2(\Gamma_h)\\
&{\bm C^N}\mapsto  S\circ B ({\bm C^N})
\end{aligned}
\end{equation*}
Note that for simplicity, the space of the measured scattered data is changed to $L^2(\Gamma_H)$.

Then the discrete inverse problem is described the following optimatimization prblem in a finite space.

\noindent
\textbf{Discrete Inverse Problem: } To find some ${\bm C^N}\in\R^N$, such that
\begin{equation}\label{eq:dip}
\|P({\bm C^N})-U\|^2_{L^2(\Gamma_h)}=\min_{{\bm C_*^N}\in\R^N}\big\{\|P({\bm C_*^N})-U\|^2_{L^2(\Gamma_h)}\big\}.
\end{equation}

In the following, we will explain in detail the Newton-CG method to solve the discrete inverse problem \eqref{eq:dip}. The linearized equation of the discrete inverse problem \eqref{eq:dip} has the following representation
\begin{equation}\label{eq:ldip}
P({\bm C^N})+(DP)({\bm C^N})({\bm H})=U,
\end{equation}
where ${\bm H}\in\R^N$, $(DP)({\bm C^N})$ is the Fr\'{e}chet derivative of $P$ at $\bm C^N$.  $P$ is a functional defined on the finite dimensional space $\R^N$, so its $n$-th derivative 
\begin{eqnarray}
\frac{\partial P}{\partial c^N_n}=(DS)(B({\bm C^N}))\phi_n.
\end{eqnarray}
For some ${\bm H}=(h_1,h_2,\dots,h_N)$, the Fr\'{e}chet derivative
\begin{equation}\label{eq:dp}
(DP)({\bm C^N}){\bm H}=\sum_{n=1}^N h_n\frac{\partial P}{\partial c^N_n}=
(DS)(B({\bm C^N}))\left[\sum_{n=1}^N h_n\phi_n\right].
\end{equation}

For any ${\bm H}\in\R^N$ and $\phi\in L^2(\Gamma_H)$,
\begin{eqnarray*}
\left({\bm H},(DP)^*\phi\right)&=&\left((DP)({\bm C^N}){\bm H},\phi\right)\\
&=&\left(\sum_{j=1}^Nh_j\phi_j(x),M\phi\right)\\
&=&\sum_{j=1}^N h_j\left(\phi_j,M\phi\right)
\end{eqnarray*} 
Finally, 
\begin{equation}\label{eq:dpstar}
(DP)^*\phi=\Big((M\phi,\phi_1),(M\phi,\phi_2),\dots,(M\phi,\phi_N)\Big).
\end{equation}




From the representation of the operator $DP$ and its adjoint $(DP)^*$, the operators could be numerically approximated by solving the problems \eqref{eq:der1}-\eqref{eq:der3} and \eqref{eq:adj1}-\eqref{eq:adj3}. Follow the steps in Section \ref{sec:dir}, the problems could also be written as  equivalent systems of coupled quasi-periodic problems, and then the systems could be solved by the method, for more details see \cite{lechl2017}.

 With the knowledge of the operators, we can conclude the Newton-CG method to solve the discrete inverse problem.

\begin{algorithm}\caption{Newton-CG Method}
Input: Data $U$; $\epsilon>0$; $j=0$.\\
Initialization: ${\bm C^N_0}=(0,\dots,0)\in\R^N$.
\begin{algorithmic}[1] 
\While{$\|P({\bm C^N_j})-U\|_{L^2(\Gamma_H)}>\epsilon\|U\|_{L^2(\Gamma_H)}$}

\\\quad $CGNE$ iteration scheme to solve $(DP)({\bm C^N_j})({\bm H})=U-P({\bm C^N_j})$
\\\quad
${\bm C^N_{j+1}}={\bm C^N_j}+{\bm H}$;\\
\quad$j=j+1$;
\EndWhile
\end{algorithmic}
\end{algorithm}

\section{Numerical Examples}

In numerical examples, all the periodic surfaces have one fixed period $\Lambda=2\pi$. The incident field is a Herglotz wave function $v_g$ with wave number $k=3$ with
\begin{equation*}
g(x)=(x-1)^6(x+1)^6\cdot \chi_{(-1,1)}(x),
\end{equation*}
where the function $\chi_{(a,b)}$ is a function equals to $1$ in $(a,b)$, and equals to $0$ otherwise. The periodic surfaces are defined by
\begin{eqnarray*}
&&f_1(x)=2+\cos x/4;\\
&&f_2(x)=2-\cos x/4;\\
&&f_3(x)=1+\sin x/3-\cos 2x/4;
\end{eqnarray*}
while the locally perturbations are defined by
\begin{eqnarray*}
&&p_1(x)=\left(-1/4-\cos x/4\right)\cdot \chi_{(-\pi,\pi)}(x);\\
&&p_2(x)=\left(1/4+\cos x/4\right)\cdot \chi_{(-\pi,\pi)}(x);\\
&&p_3(x)=5\times 10^{-4}(x-3)^3(x-3)^3\sin\left[\frac{\pi(x+3)}{3}\right]\cdot \chi_{(-3,3)}(x).
\end{eqnarray*}
The periodic surface $\Gamma^j:=\{(x,f_j(x)):\,x\in\R\}$, while the locally perturbed surface $\Gamma^j_p:=\{(x,f_j(x)+p_j(x)):\,x\in\R\}$.

\begin{figure}[tttttt!!!b]
\centering
\begin{tabular}{c c c}
\includegraphics[width=0.3\textwidth]{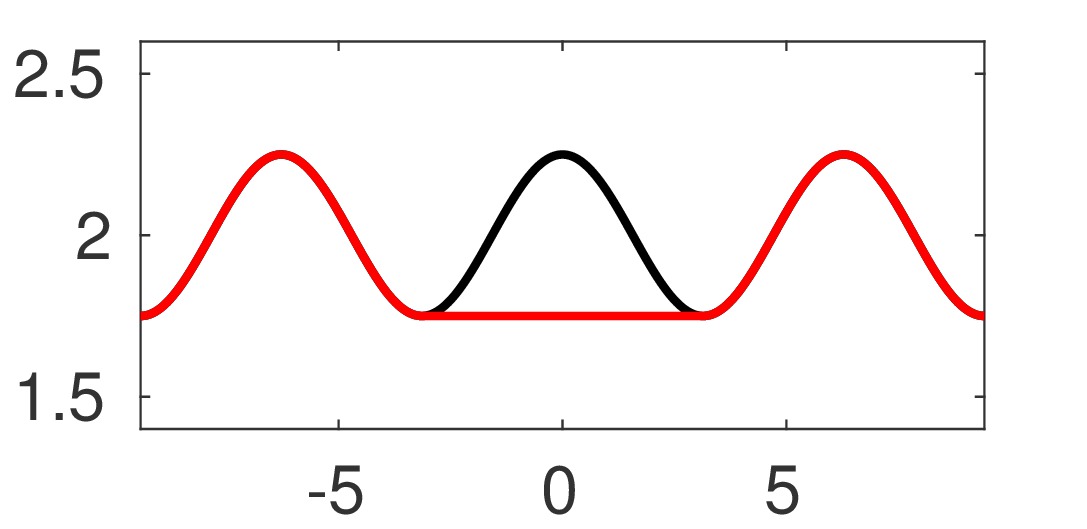} 
& \includegraphics[width=0.3\textwidth]{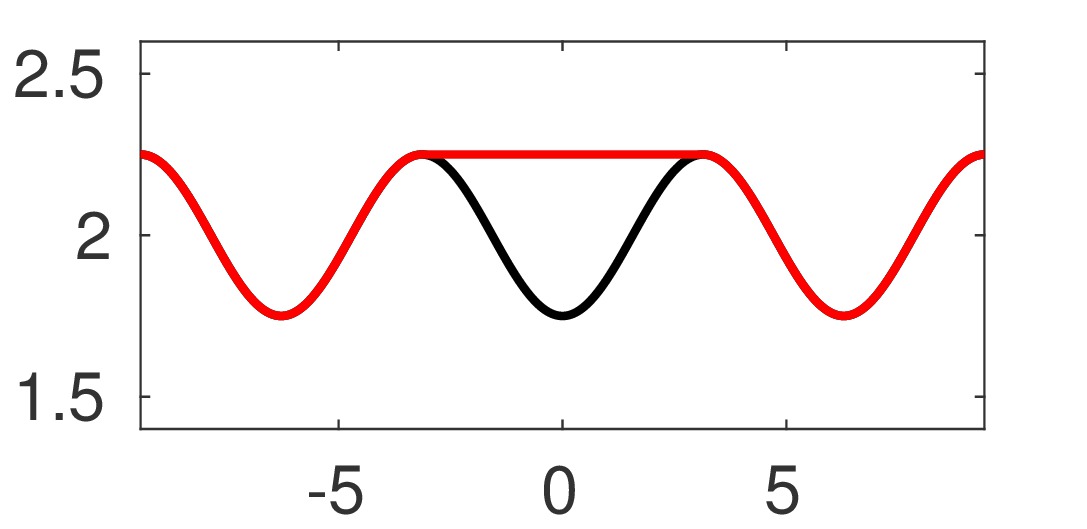} 
& \includegraphics[width=0.3\textwidth]{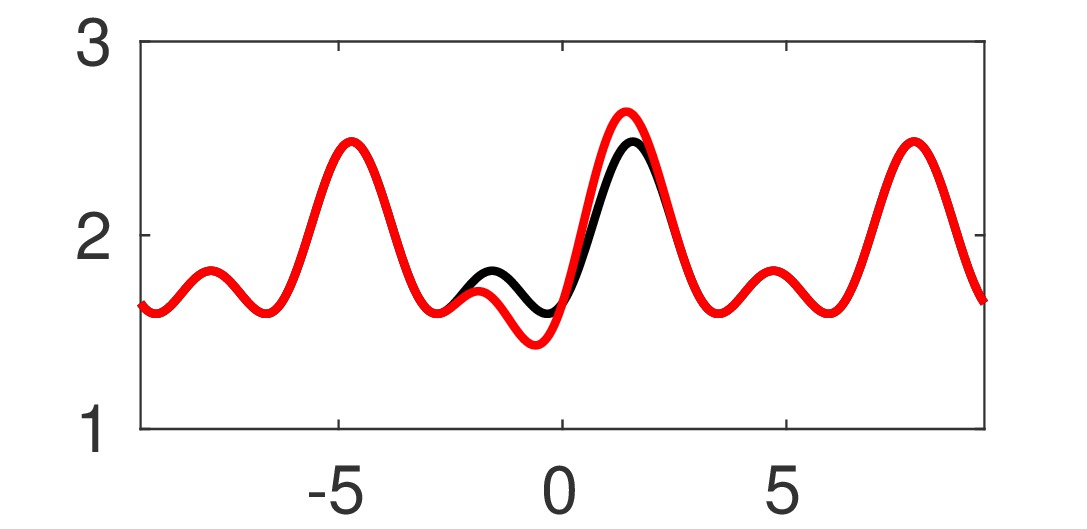}\\[-0cm]
(a) & (b) & (c) 
\end{tabular}%
\caption{(a)-(c): The three periodic surfaces $\Gamma^{1,2,3}$ (black) and $\Gamma^{1,2,3}_p$ (red) defined by the functions $f_{1,2,3}$ and $p_{1,2,3}$.}
\end{figure}

\begin{figure}[tttttt!!!b]
\centering
\begin{tabular}{c c c}
\includegraphics[width=0.3\textwidth]{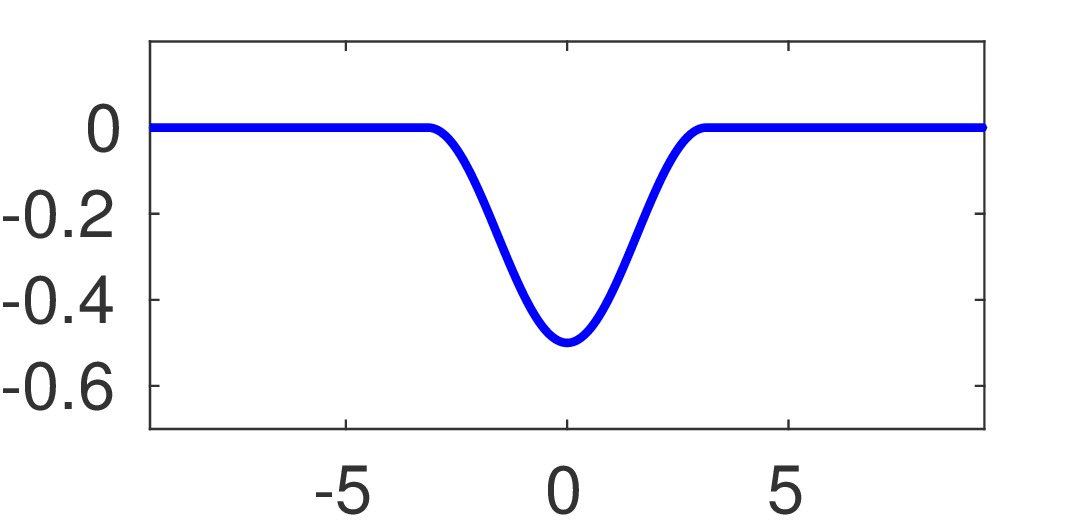} 
& \includegraphics[width=0.3\textwidth]{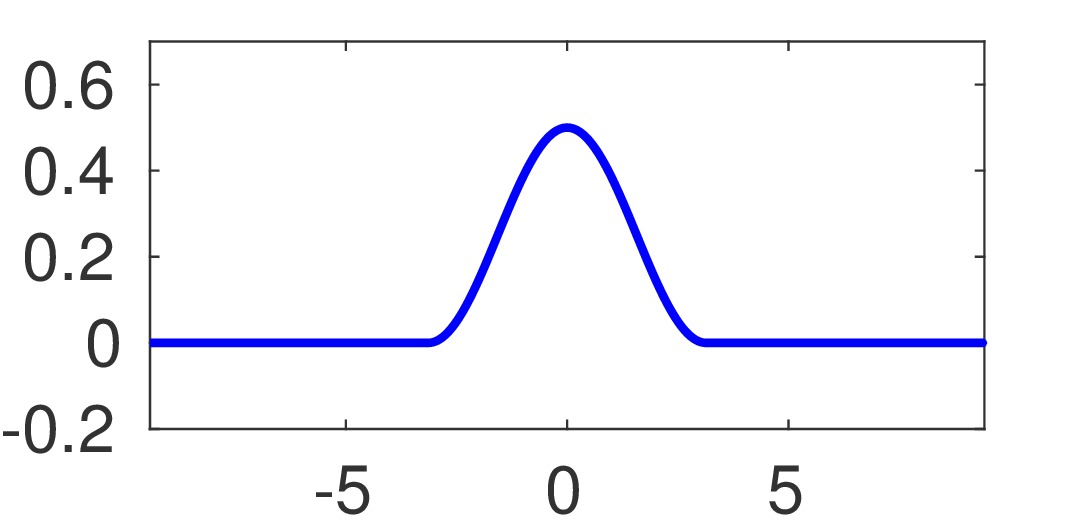} 
& \includegraphics[width=0.3\textwidth]{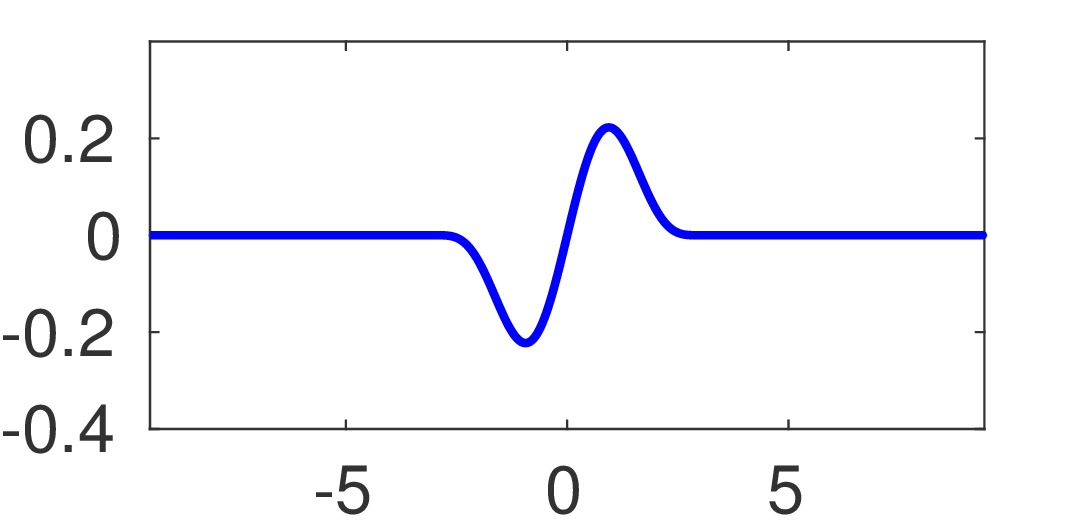}\\[-0cm]
(a) & (b) & (c) 
\end{tabular}%
\caption{(a)-(c): The three local perturbation functions $p_{1,2,3}$.}
\end{figure}

Assume that $\mathcal{M}_h$ is a quasi-uniform mesh of the periodic domain $\Omega^\Lambda_H$, for some $h>0$. We also divide the interval $\Wast$  uniformly by the grid points
\begin{equation*}
\alpha_M^{(j)}=-\frac{\pi}{\Lambda}+\frac{\pi}{M\Lambda}(2j-1)\quad j=1,2,\dots,M.
\end{equation*}
Then the problem \eqref{eq:var_blo} is discretized in this mesh, and could be solved by the finite element method introduced in  \cite{lechl2017}. 

For the numerical simulation of the measured data, we fix the meshsize $h=0.01$ and $M=80$. The scattered data is obtained by the Rayleigh expansion \eqref{eq:ray} for each quasi-periodicity $\alpha^{(j)}_M$ and the inverse Bloch transform, and is measured either on $\Gamma_H$ (near-field data) or on $\Gamma_h$ (far-field data), where $H=4$ and $h=100$. We collect the measured data in the interval $[-40\pi,40\pi]$ for the first and second structure, while in $[-160\pi,160\pi]$ for the third structure. Let $\widetilde{\Gamma}_H$ and $\widetilde{\Gamma}_h$ be the line segments. A $5\%$ random noise is also added to the scattered field, i.e.,
\begin{equation}
U^{near}=u^s_p|_{\widetilde{\Gamma}_H}(1+0.05\,c),\quad U^{far}=u^s_p|_{\widetilde{\Gamma}_h}(1+0.05\,c),
\end{equation}
where $c$ is a random function with the range $[-1,1]$.

With the measured data $U^{near}$ or $U^{far}$, we will start the reconstruction of the perturbation $p$. From the definitions of $p_1,\,p_2,\,p_3$, the integer $J=0$. For the initialization step, we show the results both from the near-field data (see Figure \ref{fig:Ini_near}) and the far-field data (see Figure \ref{fig:Ini_far}) for the three structures. From the Figures \ref{fig:Ini_near} and \ref{fig:Ini_far}, we can see that the initialization method works well for both near-field and far-field data. However, the results for simpler surfaces ($\Gamma^1_p$ and $\Gamma^2_p$) are better than results for the more complicated surface, i.e., $\Gamma^3_p$.

\begin{figure}[tttttt!!!b]\label{fig:Ini_near}
\centering
\begin{tabular}{c c c}
\includegraphics[width=0.3\textwidth]{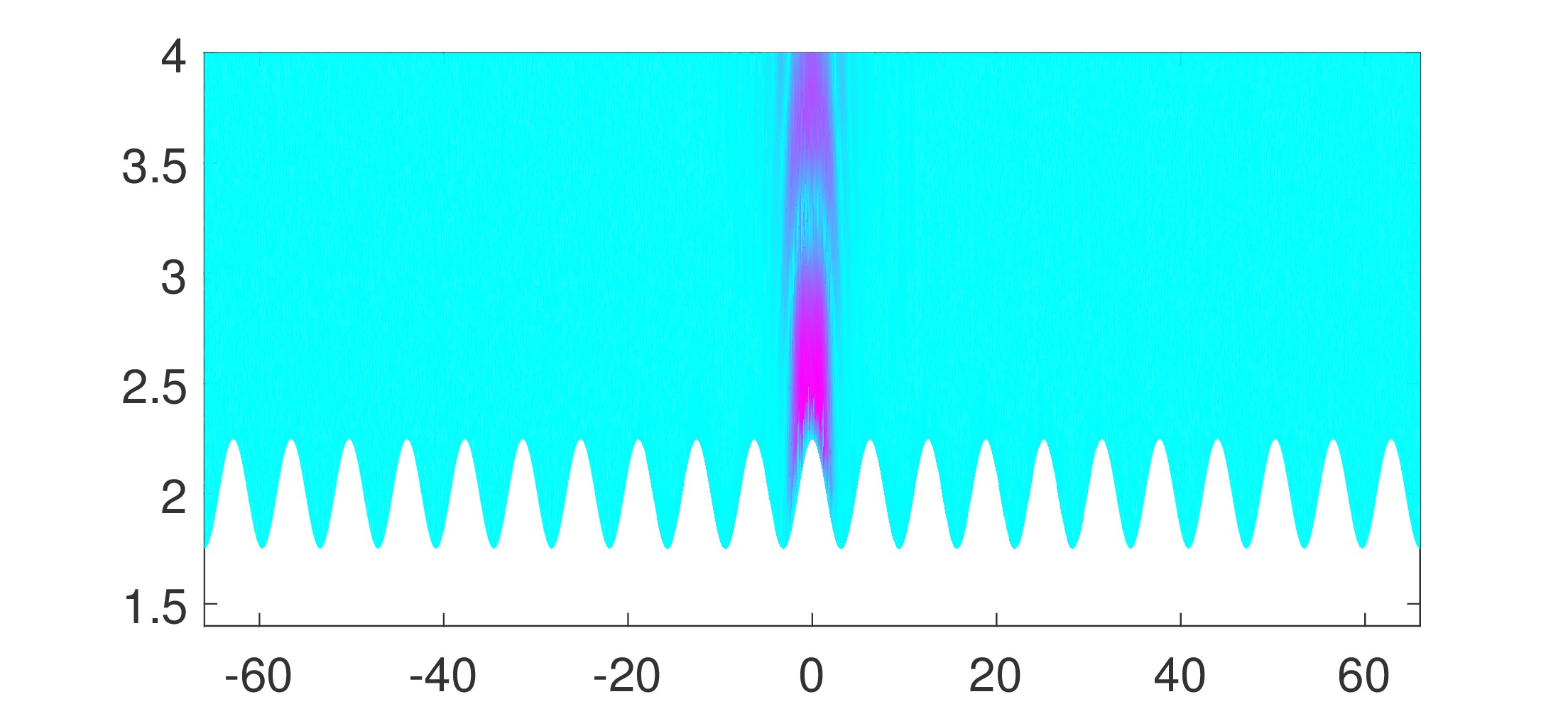} 
& \includegraphics[width=0.3\textwidth]{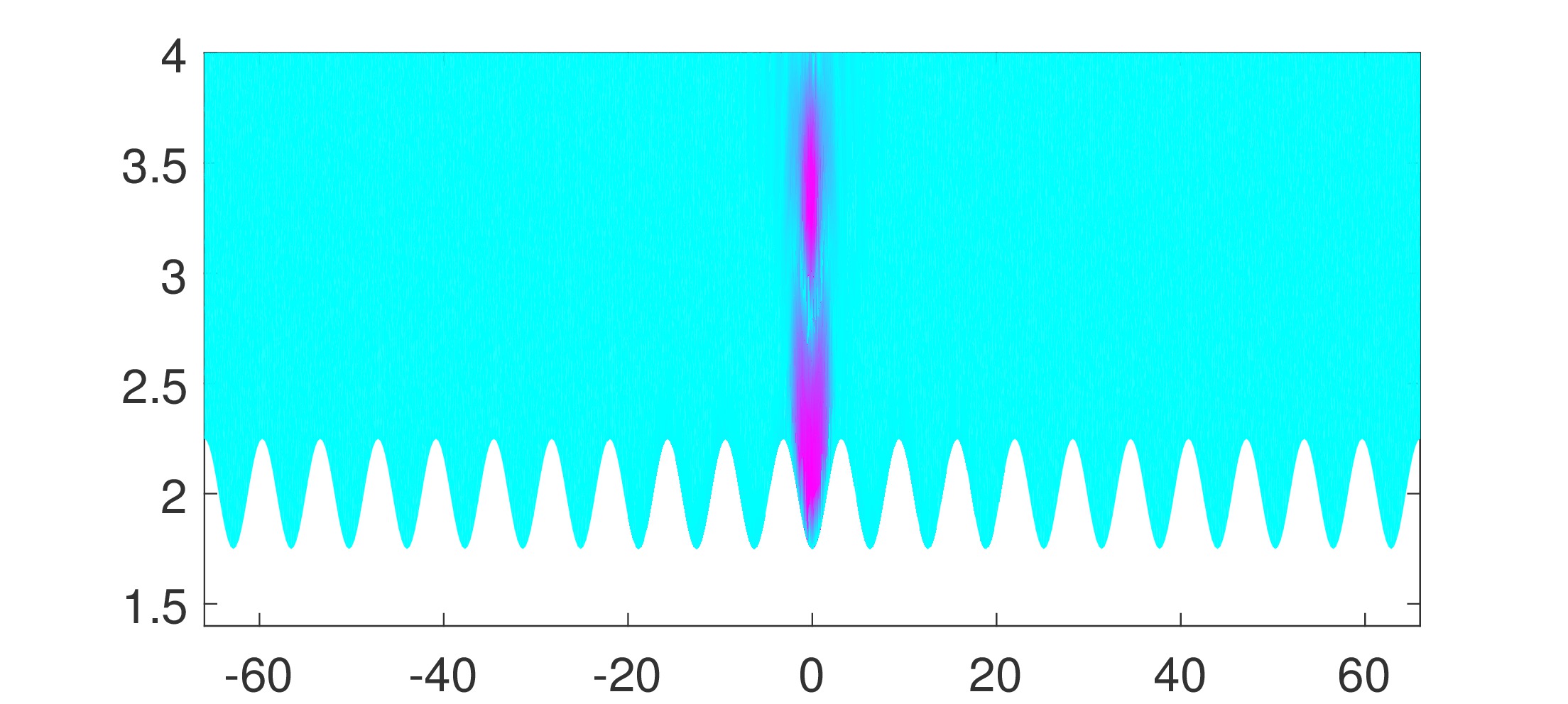} 
& \includegraphics[width=0.3\textwidth]{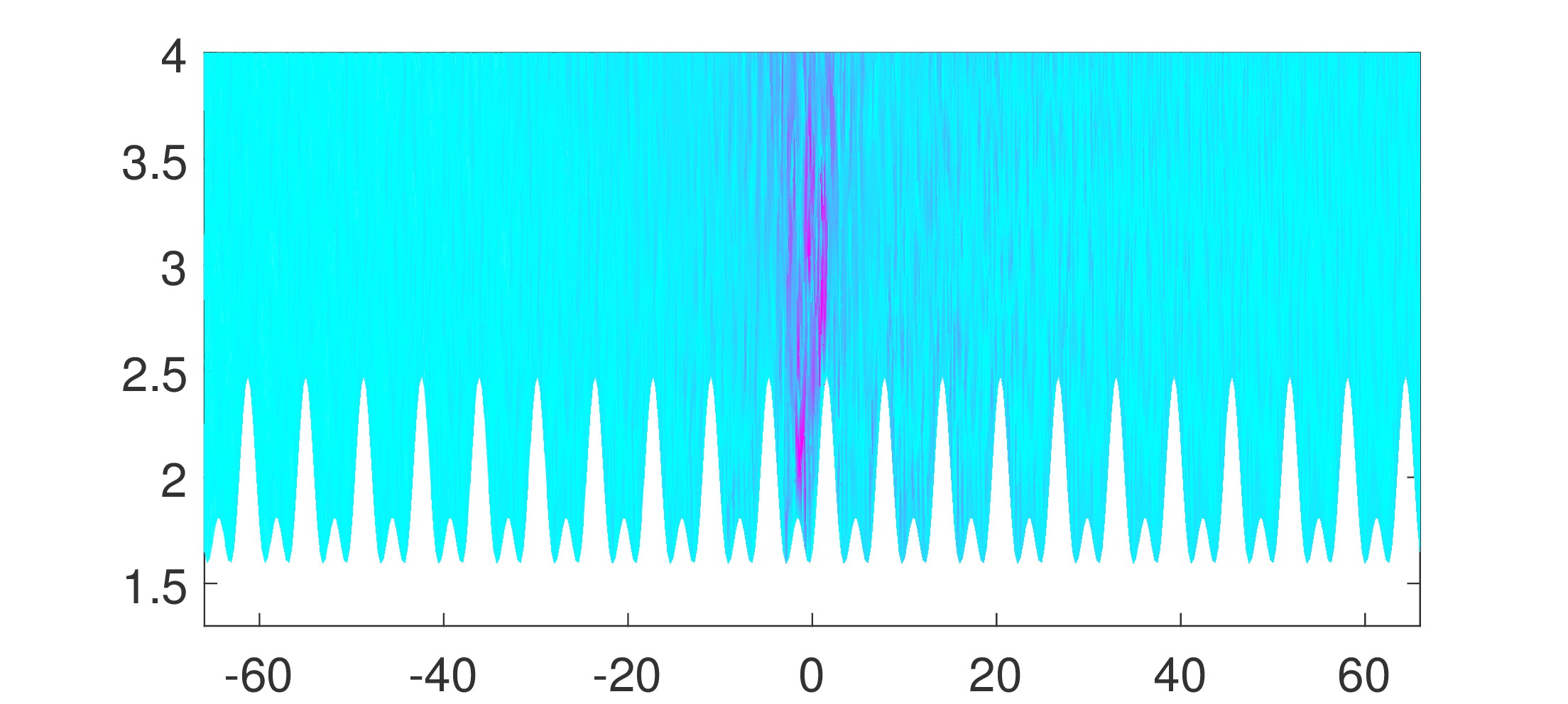}\\[-0cm]
(a) & (b) & (c) 
\end{tabular}%
\caption{(a)-(c): Initialization from near-field data for $\Gamma_p^{1,2,3}$.}
\end{figure}

\begin{figure}[tttttt!!!b]\label{fig:Ini_far}
\centering
\begin{tabular}{c c c}
\includegraphics[width=0.3\textwidth]{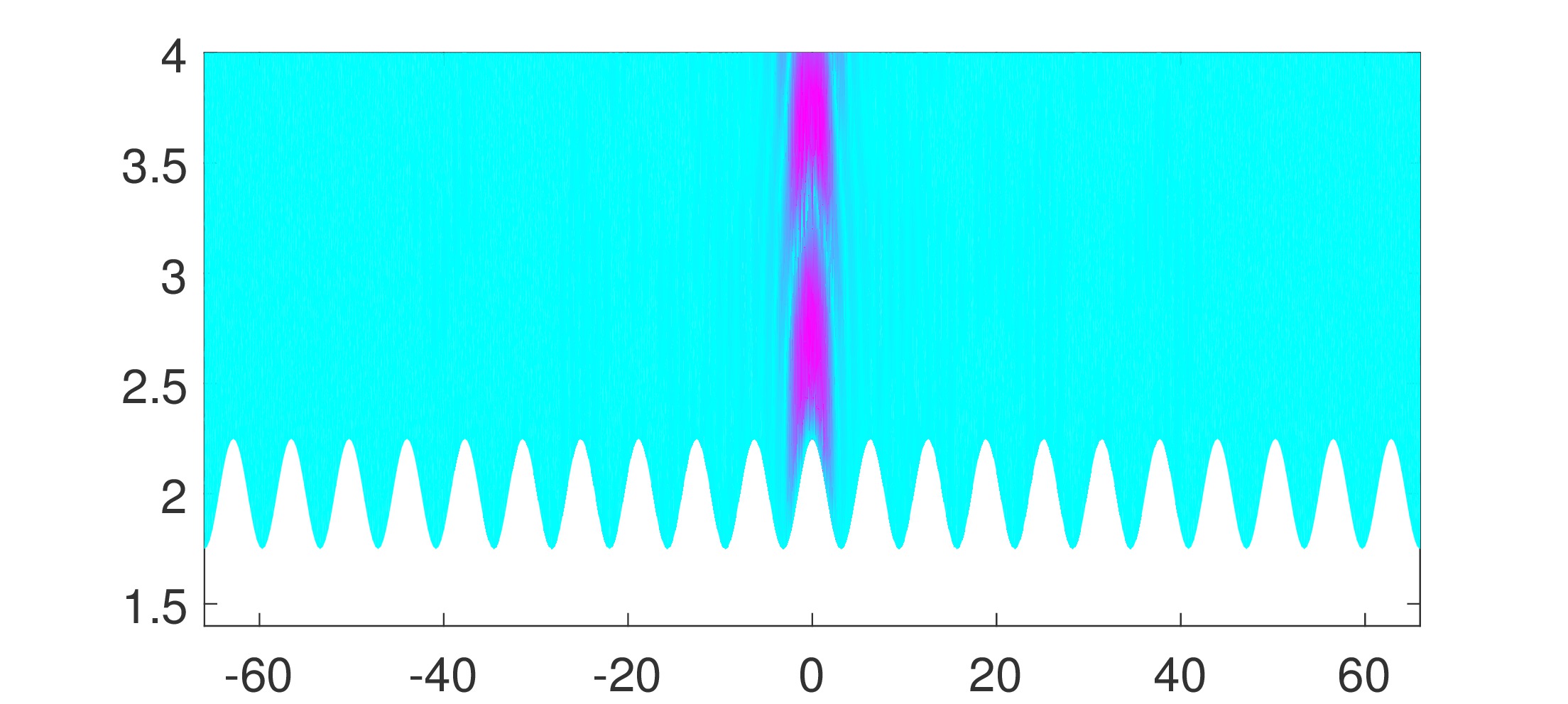} 
& \includegraphics[width=0.3\textwidth]{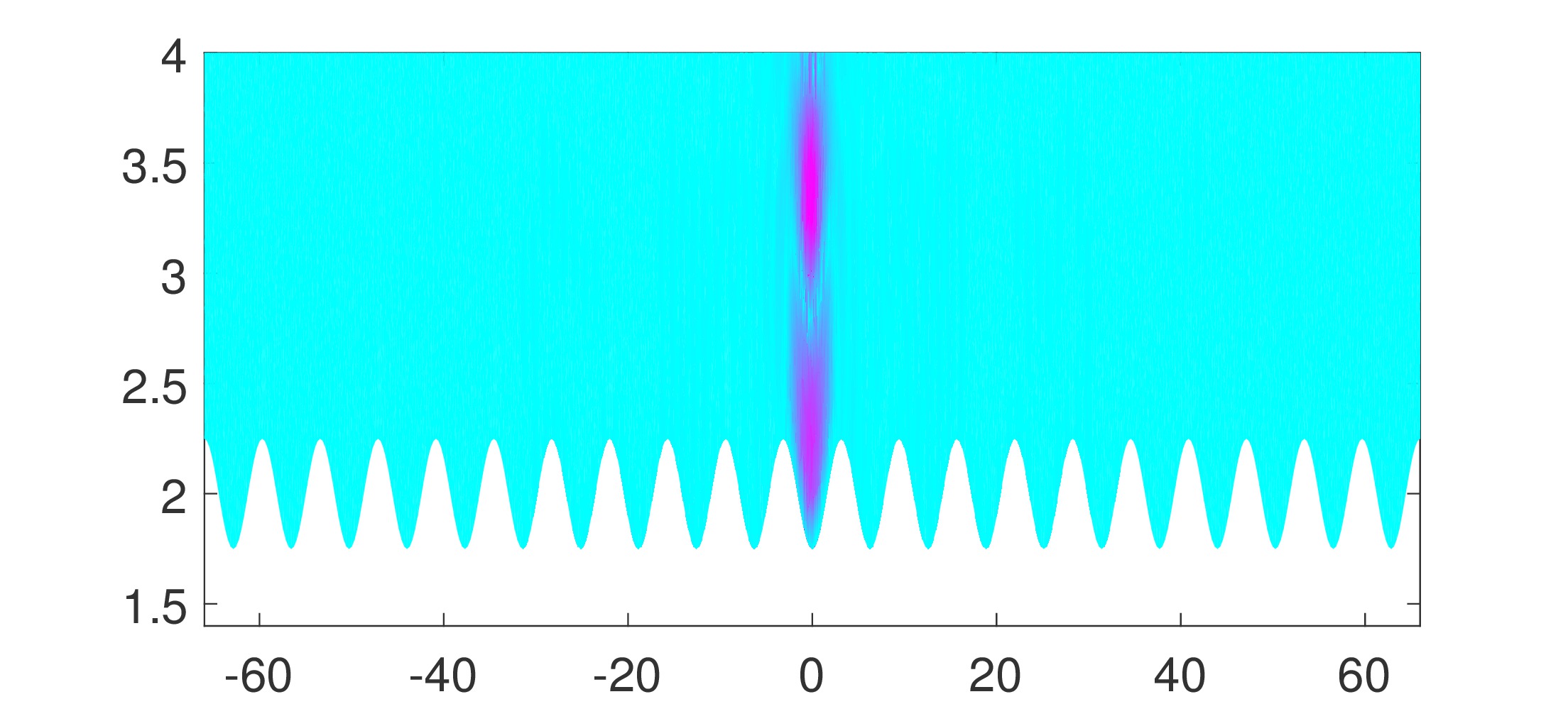} 
& \includegraphics[width=0.3\textwidth]{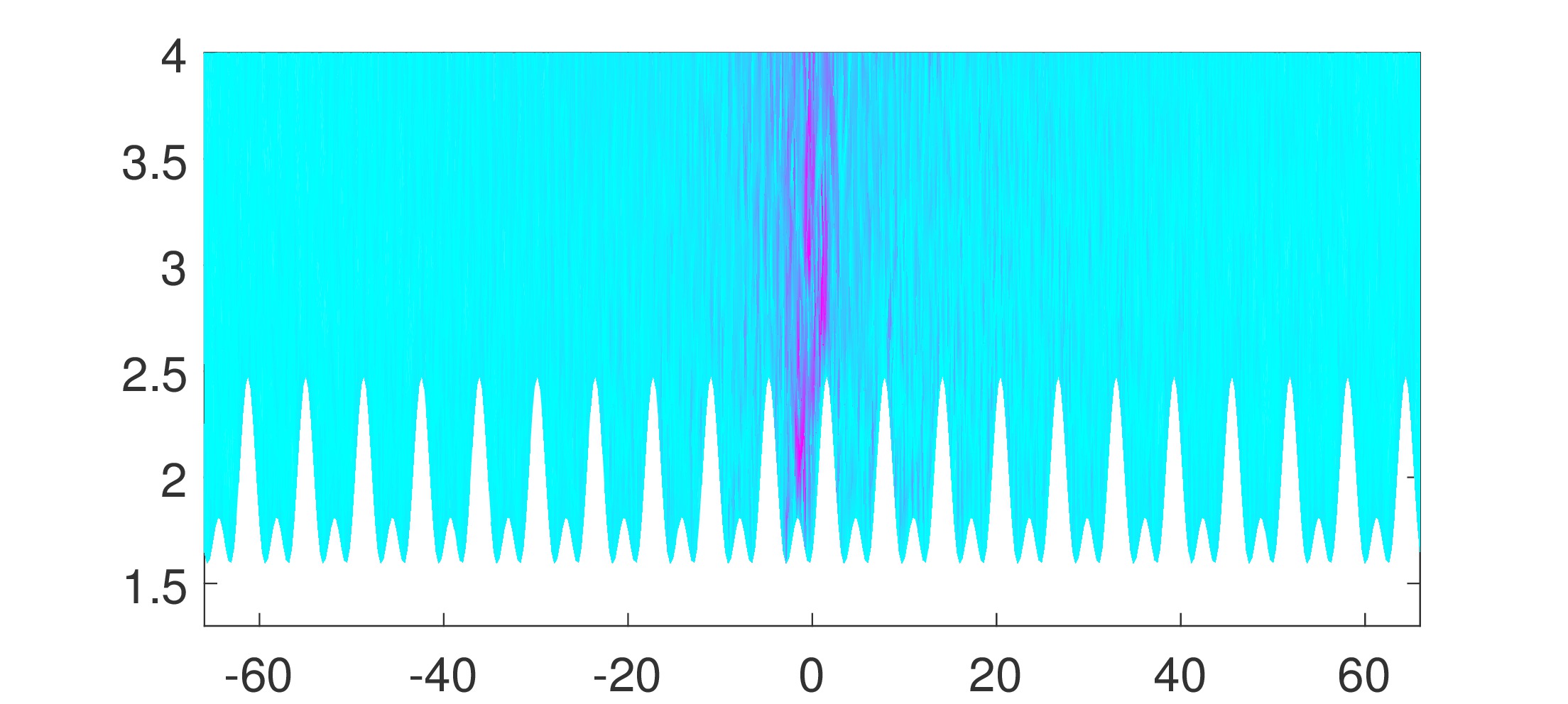}\\[-0cm]
(a) & (b) & (c) 
\end{tabular}%
\caption{(a)-(c): Initialization from far-field data for $\Gamma_p^{1,2,3}$.}
\end{figure}

In the Newton-CG algorithm to reconstruct the function $p$, spline basis functions are chosen as basic functions to approximate $p$. The function $p$ is approximated by the finite series with $N=10$ basic functions. 

During the Newton-CG scheme, the problems \eqref{eq:der1}-\eqref{eq:der3} and \eqref{eq:adj1}-\eqref{eq:adj3} could be reformulated as a coupled family of quasi-periodic problems, with the similar techniques in Section \ref{sec:dir}. To solve these problems, the mesh size is chosen as $h=0.02$ with $N=80$. 
\begin{remark}
For the measured data $U^{far}$, to solve the problem in a relatively smaller domain, we will define the approximated near field data $\widetilde{U}^{near}$ on $\Gamma_H$, and then solve the problems in $\Omega^\Lambda_H$ by using the field $\widetilde{U}^{near}$. As the Bloch transform $\left(\J_\Omega U^{far}\right)(\alpha,\cdot)$ has the form of
\begin{equation*}
\left(\J_\Omega U^{far}\right)(\alpha,\cdot)=\sum_{j\in\Z}\hat{W}_j e^{\i(\Lambda^*j-\alpha)x_1}.
\end{equation*}
For each $\alpha\in\Wast$, $\left(\J_\Omega U^{far}\right)(\alpha,\cdot)$ has evanescent modes that decreases very fast,  we define the approximated fields by removing such modes, i.e.,
\begin{equation*}
W(\alpha,x):=\sum_{j\in\Z:\,|\Lambda^*j-\alpha|\leq k}\hat{W}_j e^{\i(\Lambda^*j-\alpha)x_1+\i\sqrt{k^2-|\Lambda^*j-\alpha|^2}(x_2-h)}.
\end{equation*}
Thus the approximated near field by
\begin{equation*}
\widetilde{U}^{near}=\left(\J_\Omega^{-1} W\right)(x_1,H).
\end{equation*}
\end{remark}

The numerical results from Newton-CG method are shown in Figure \ref{fig:New_CG_near} with near-field data, while in Figure \ref{fig:New_CG_far} with far-field data. For the simpler surfaces, i.e., for the first and second structure, the reconstruction is very well, for both near-field data and far-field data. However, for the third structure, it is easier for us to obtain a better reconstruction from the near-field data. This is reasonable for the far-field data may lost some information from the surface.

\begin{figure}[tttttt!!!b]\label{fig:New_CG_near}
\centering
\begin{tabular}{c c c}
\includegraphics[width=0.3\textwidth]{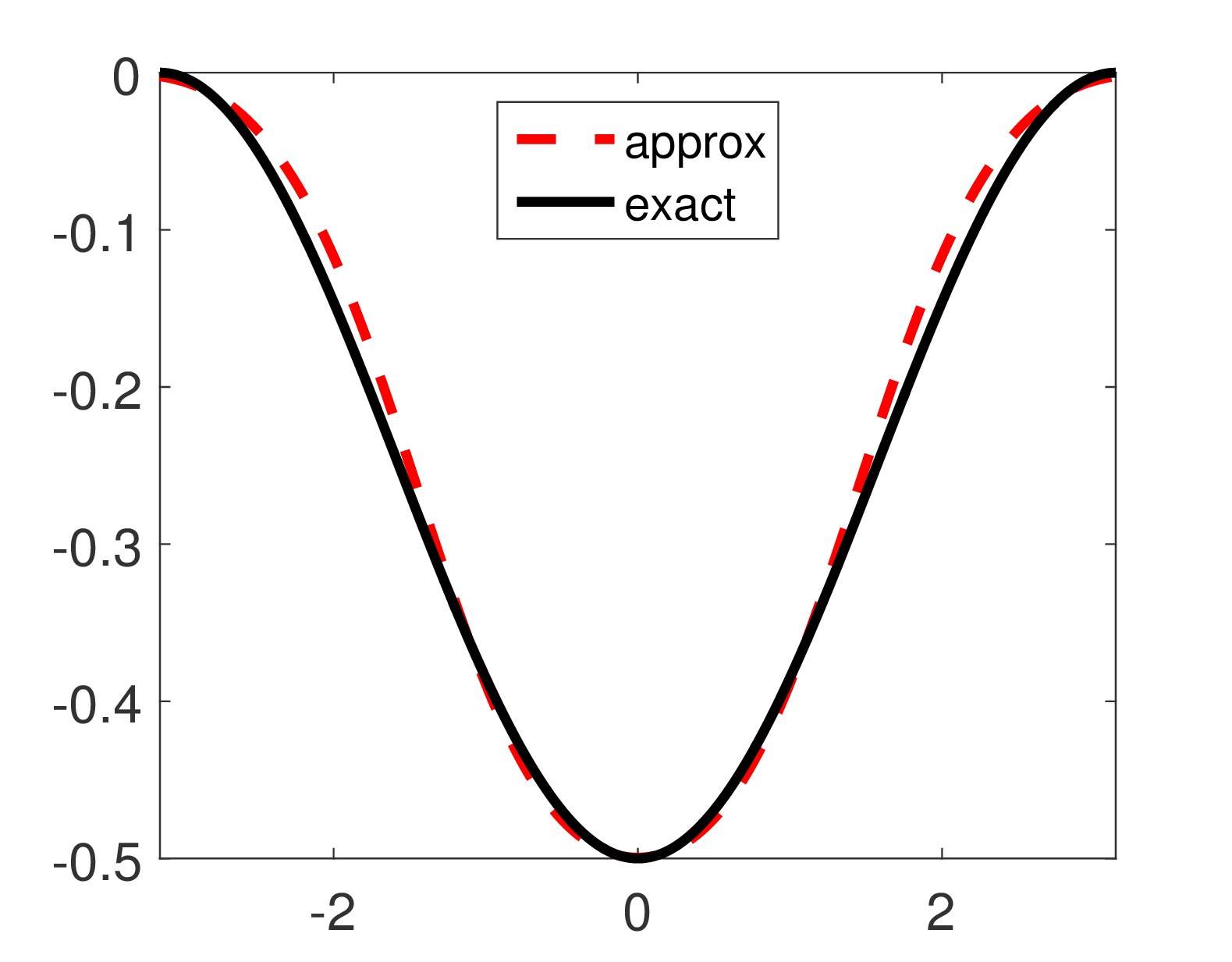} 
& \includegraphics[width=0.3\textwidth]{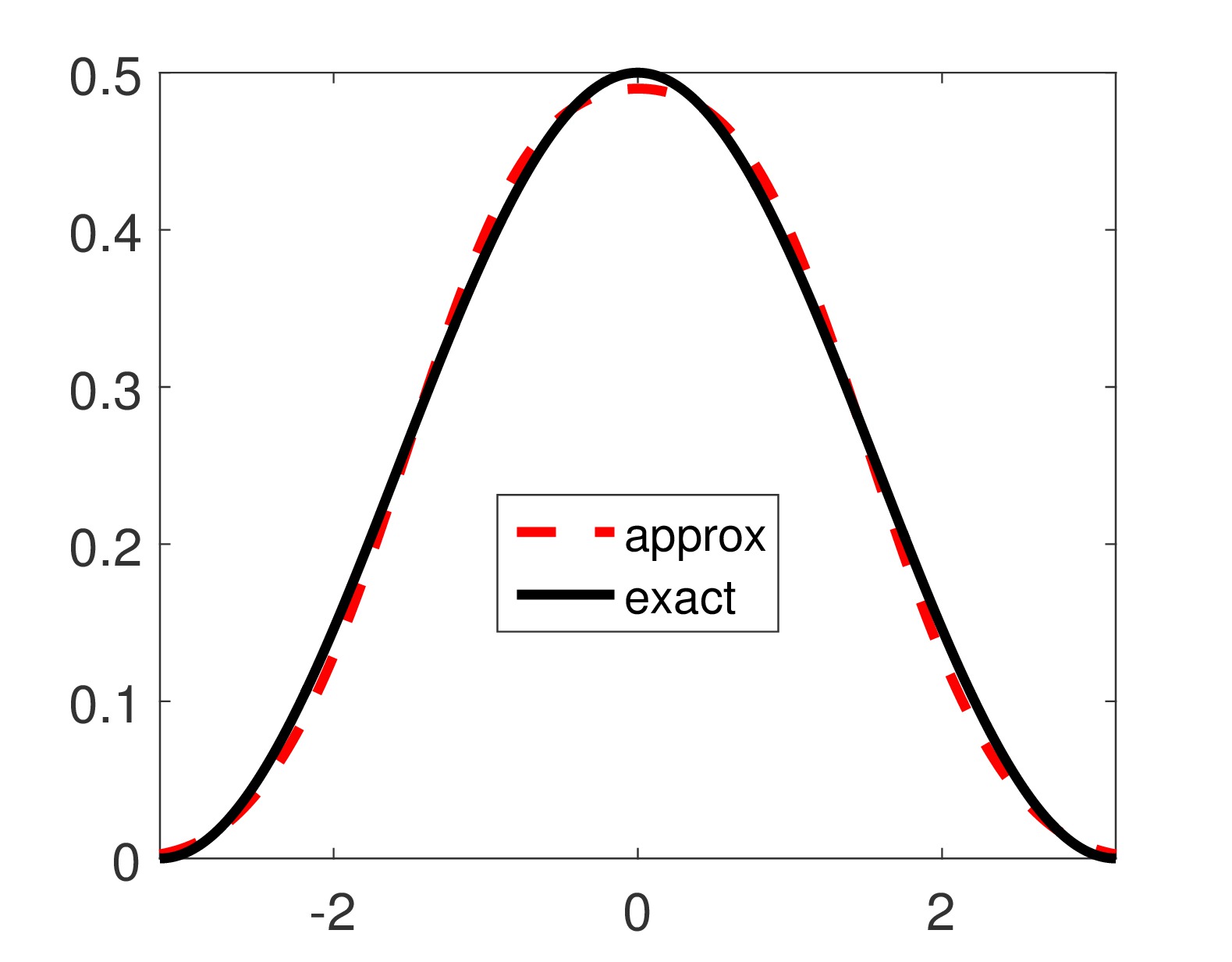} 
& \includegraphics[width=0.3\textwidth]{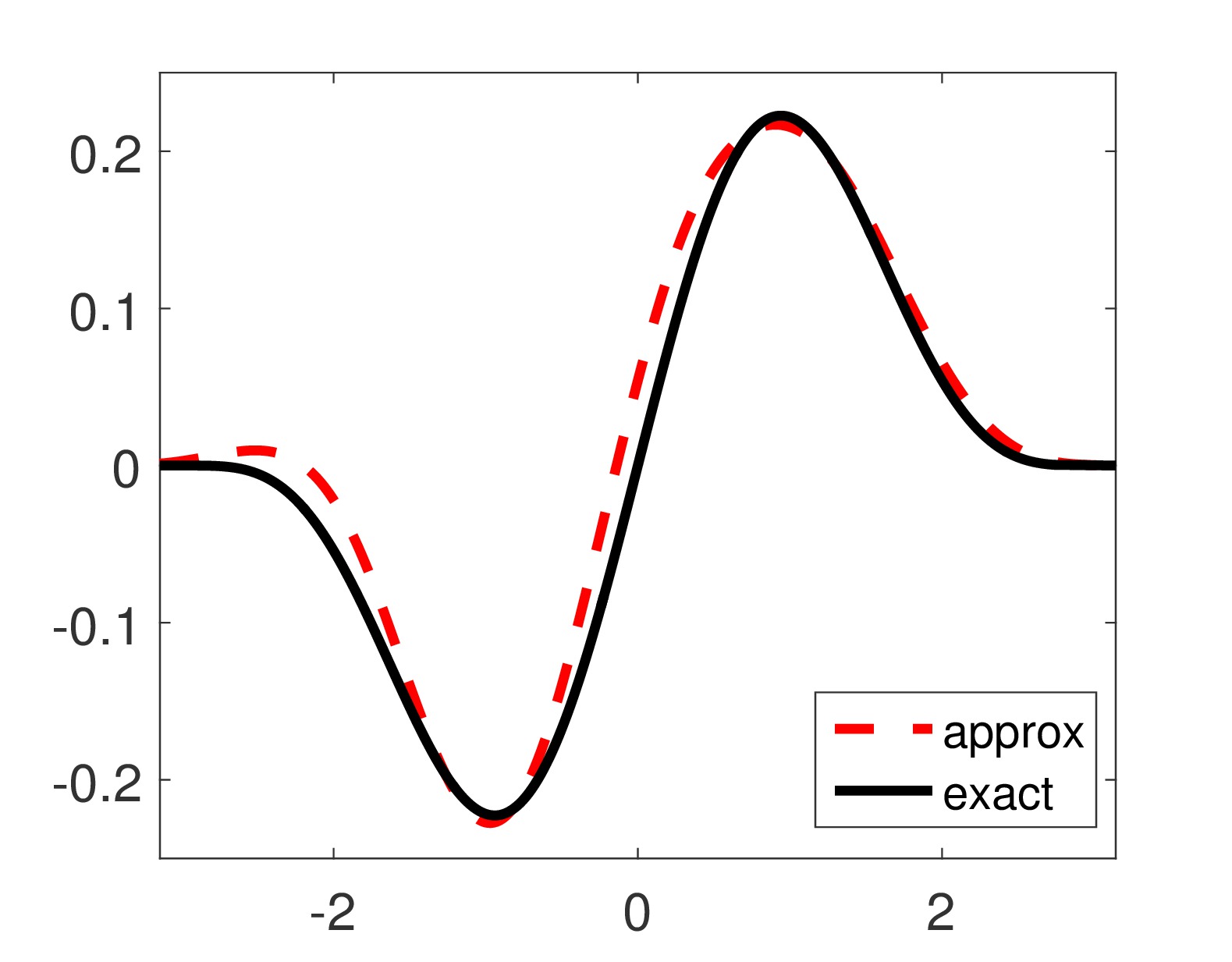}\\[-0cm]
(a) & (b) & (c) 
\end{tabular}%
\caption{(a)-(c): The reconstructions of $p_{1,2,3}$ from near-field data.}
\end{figure}

\begin{figure}[tttttt!!!b]\label{fig:New_CG_far}
\centering
\begin{tabular}{c c c}
\includegraphics[width=0.3\textwidth]{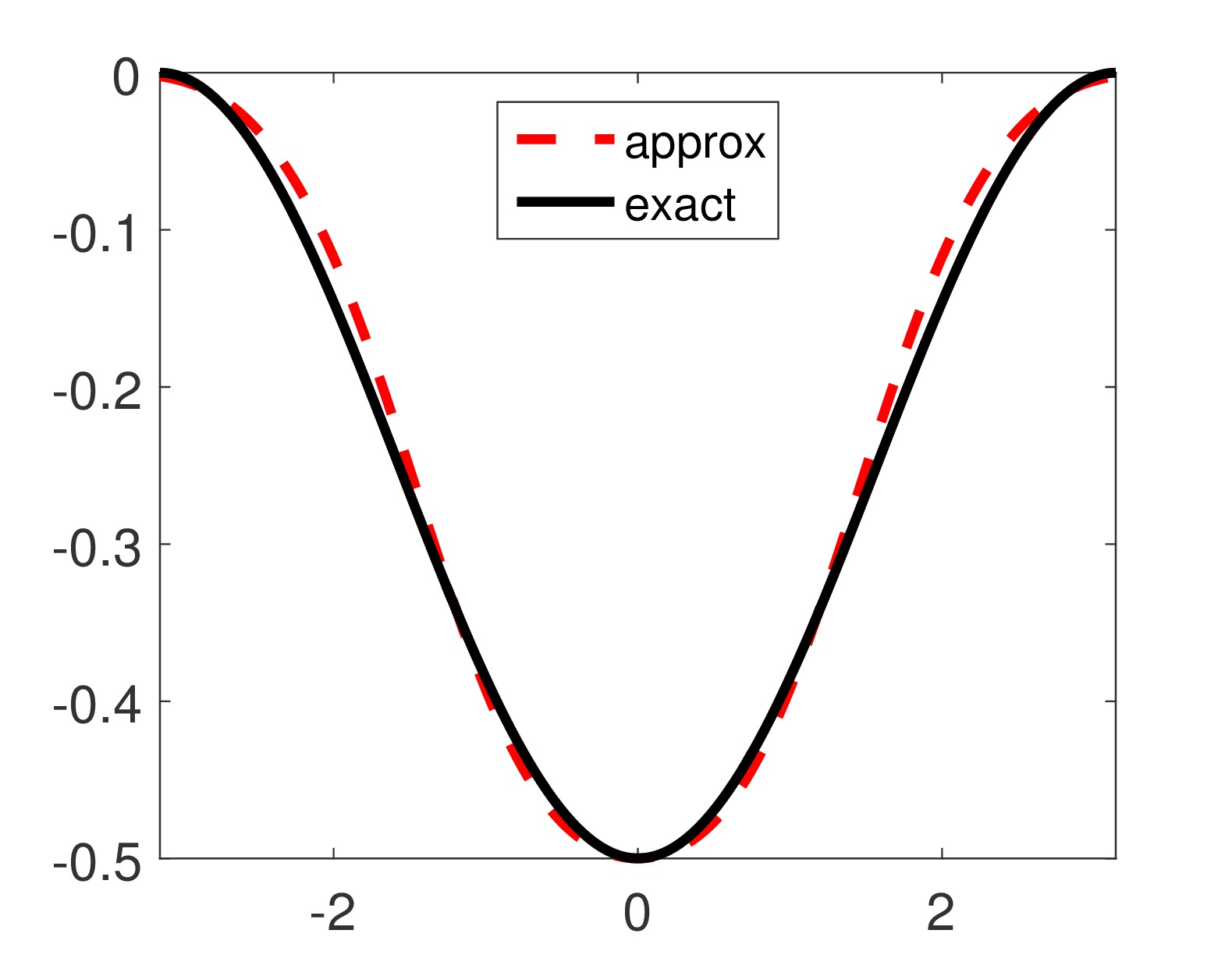} 
& \includegraphics[width=0.3\textwidth]{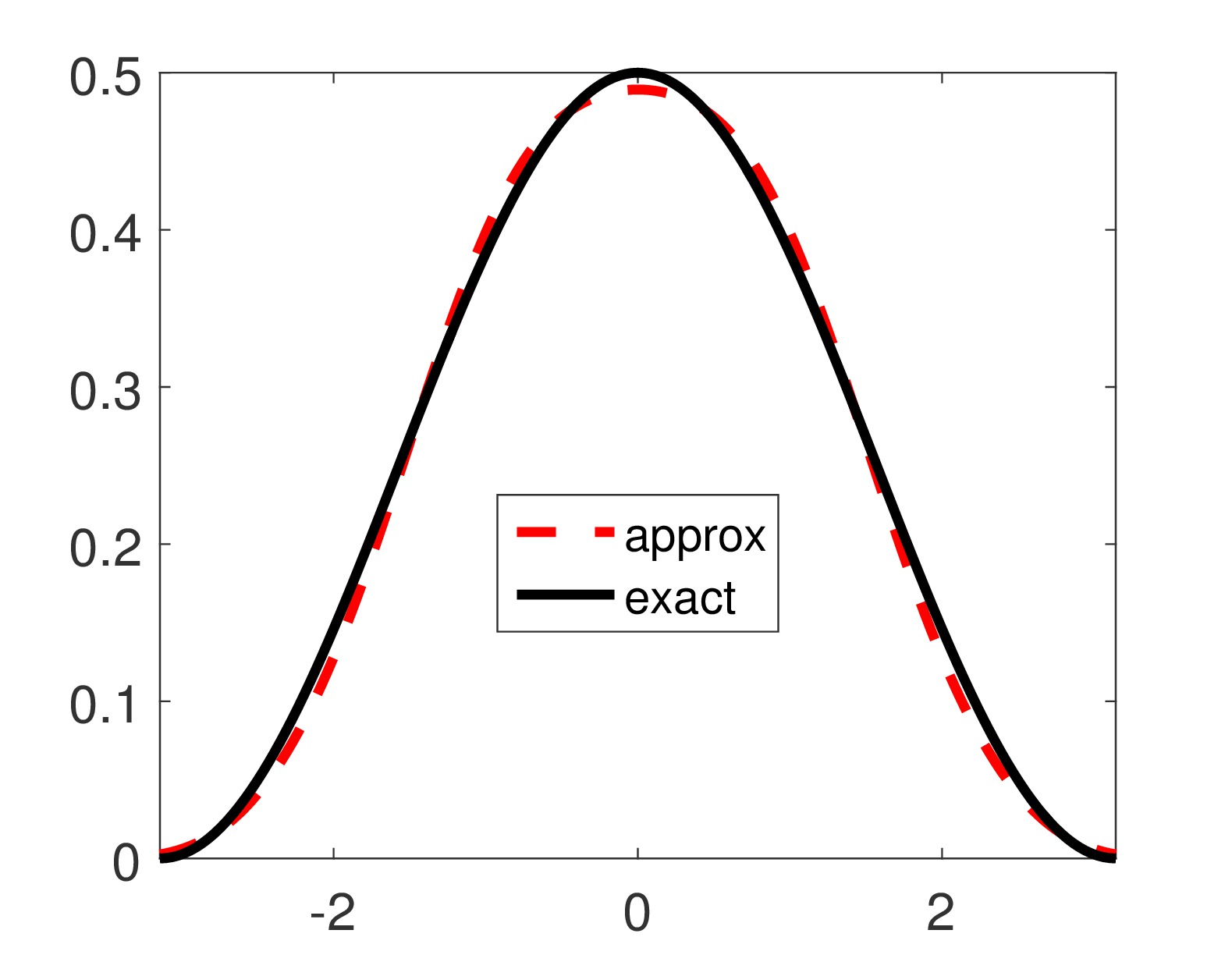} 
& \includegraphics[width=0.3\textwidth]{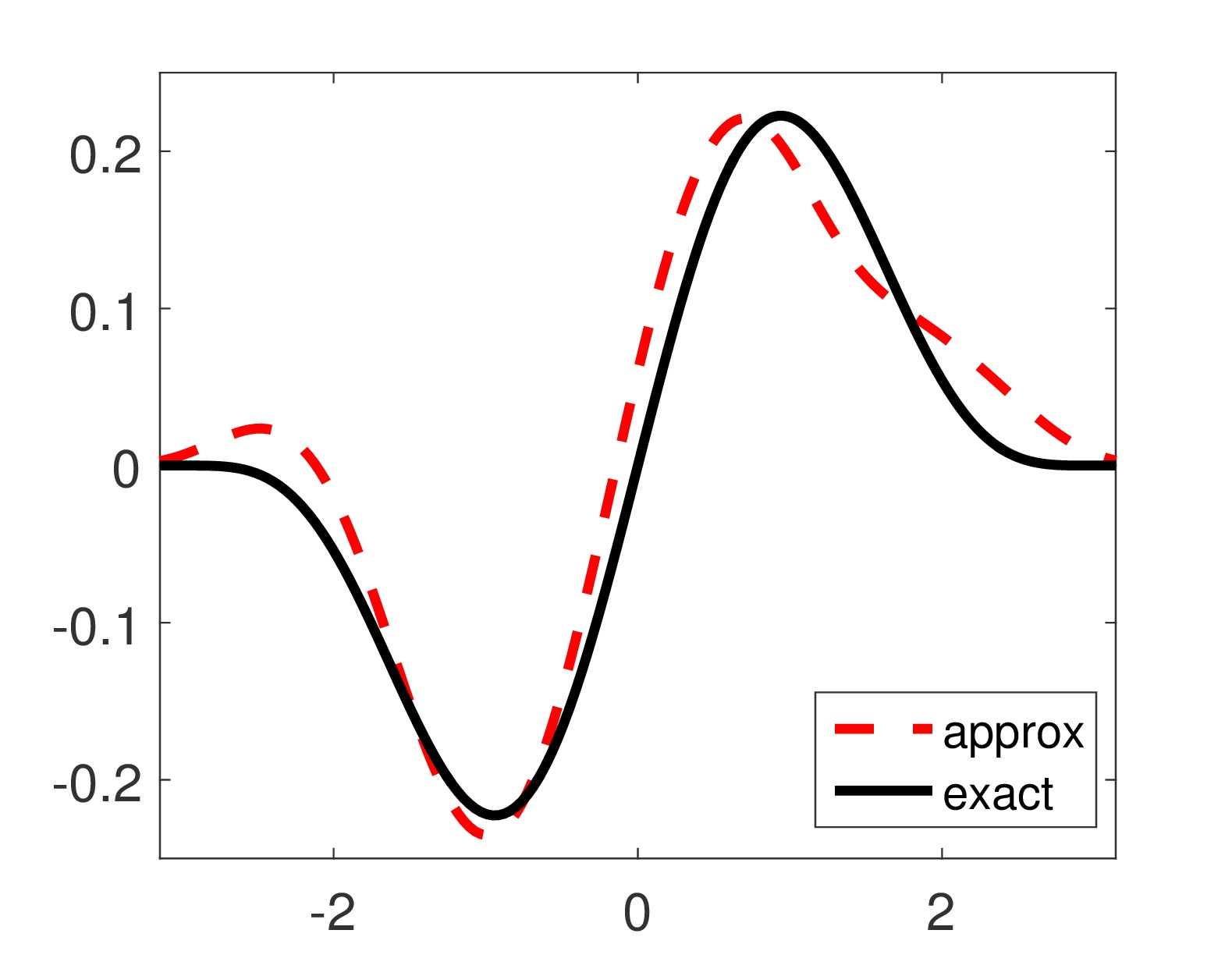}\\[-0cm]
(a) & (b) & (c) 
\end{tabular}%
\caption{(a)-(c): The reconstructions of $p_{1,2,3}$ from near-field data.}
\end{figure}

\section*{Acknowlegdments}
The second author was supported by the University of Bremen and the European Union FP7 COFUND under grant agreement n$^o$ 600411.

\bibliographystyle{alpha}
\bibliography{ip-biblio} 

\end{document}